\newtheorem{theorem}{Theorem}
\theoremstyle{plain}
\newtheorem{definition}{Definition}
\newtheorem{lemma}{Lemma}
\newtheorem{proposition}{Proposition}
\newtheorem{remark}{Remark}
\numberwithin{equation}{section}
\begin{document}
\title[Weighted Stochastic Field Exponent Sobolev Spaces]{Weighted
Stochastic Field Exponent Sobolev Spaces and Nonlinear Degenerated Elliptic
Problem}
\author{Ismail AYDIN}
\address{Sinop University Faculty of Arts and Sciences Department of
Mathematics \\
Sinop, TURKEY}
\email{iaydin@sinop.edu.tr}
\urladdr{}
\thanks{}
\author{Cihan UNAL}
\address{Sinop University Faculty of Arts and Sciences Department of
Mathematics \\
Sinop, TURKEY}
\email{cunal@sinop.edu.tr}
\urladdr{}
\date{}
\subjclass[2000]{Primary 46E35, 43A15, 60H15}
\keywords{Quasilinear elliptic equation, weighted stochastic field exponent
Sobolev spaces, pseudo-monotone operator, compact embedding theorem}
\dedicatory{}
\thanks{}

\begin{abstract}
In this study, we consider weighted stochastic field exponent function
spaces $L_{\vartheta }^{p(.,.)}\left( D\times \Omega \right) $ and $%
W_{\vartheta }^{k,p(.,.)}\left( D\times \Omega \right) $. Also, we
investigate some basic properties and embeddings of these spaces. Finally,
we present an application of these spaces to the stochastic partial
differential equations with stochastic field growth.
\end{abstract}

\maketitle

\section{Introduction}

Nonlinear partial differential equations arise in chemical and biological
problems, in formulating fundamental laws of nature, in different areas of
physics, applied mathematics and engineering (such as solid mechanics, fluid
dynamics, acoustics, nonlinear optics, plasma physics, quantum eld theory)
and numerous applications. To study these equations is a very difficult task
because there are no general methods to solve such equations. Moreover, the
existence and the uniqueness of the solutions are fundamental, hard-to-prove
questions for any nonlinear equation with given boundary conditions.

In various applications (such as elasticity, non-Newtonian fluids and
electrorheological fluids, see \cite{Ruz}), it can be seen the boundary
value obstacle problems for elliptic equations. Many of these type equations
have been investigated for constant exponents of nonlinearity but it seems
to be more realistic to assume the variable exponent.

Harjulehto et. al \cite{Har} investigate an overview of applications to
differential equations with non-standard growth. Also, Aoyama \cite{Aoy}
discussed the some properties of Lebesgue spaces with variable exponent on a
probability space. In 2014, Tian et. al \cite{Tian} introduced stochastic
field exponent function spaces $L^{p(.,.)}\left( D\times \Omega \right) $
and $W^{k,p(.,.)}\left( D\times \Omega \right) $. They gave also an
application to the stochastic partial differential equations with stochastic
field growth in these spaces. Moreover, Lahmi et. al \cite{Lah} proved the
existence of solutions for the nonlinear $p(.)$-degenerate problems
involving nonlinear operators. This study is a generalization of \cite{Lah}, 
\cite{Tian2} and \cite{Tian}. Stochastic partial differential equations have
many applications in finance, such as option pricing etc.

In this paper, we define weighted stochastic field exponent function spaces $%
L_{\vartheta }^{p(.,.)}\left( D\times \Omega \right) $ and $W_{\vartheta
}^{k,p(.,.)}\left( D\times \Omega \right) $, and discuss some basic
properties of these spaces. Finally, we discuss the existence and uniqueness
of the weak solution for the nonlinear degenerated weighted $p(.,.)$
elliptic problem (the stochastic partial differential equations with
stochastic field growth)%
\begin{equation}
\left\{ 
\begin{array}{cc}
-\func{div}A\left( x,t,u,\nabla u\right) +A_{0}\left( x,t,u,\nabla u\right)
=f\left( x,t\right) , & \left( x,t\right) \in D\times \Omega \\ 
u=0, & \left( x,t\right) \in \partial D\times \Omega%
\end{array}%
\right.  \label{1}
\end{equation}%
where $A\left( x,t,s,\xi \right) $ is a Carath\'{e}odory function, which is
measurable stochastic fields on $D\times \Omega $ and continuous for $s$ and 
$\xi $ under some conditions.

It is known that pseudo-monotone operators have been many applications in
nonlinear elliptic equations. For example, Browder \cite{Brow} studied a
class of pseudo-monotone operators and applied it to a kind of boundary
value problems for nonlinear elliptic equations.

By the theory of pseudo-monotone operators, our aim is to show the
compactness techniques and the existence of a least weak solution of (\ref{1}%
). The special case of the equation of (\ref{1}) is the following equation%
\begin{equation*}
\left\{ 
\begin{array}{cc}
-\func{div}\left( \vartheta (x,t)\left\vert \nabla u\right\vert
^{p(x,t)-2}\nabla u\right) +\vartheta (x,t)g(u)\left\vert \nabla
u\right\vert ^{p(x,t)-1}=f\left( x,t\right) , & \left( x,t\right) \in
D\times \Omega \\ 
u=0, & \left( x,t\right) \in \partial D\times \Omega%
\end{array}%
\right. .
\end{equation*}%
Let $\lambda $ be a product measure on $D\times \Omega $ and $u(x,t)$ be a
Lebesgue measurable stochastic field on $D\times \Omega $, where $D$ is a
bounded open subset of $%
\mathbb{R}
^{d}$ $\left( d>1\right) $, and $\left( \Omega ,\tciFourier ,P\right) $ is a
complete probability space.

\section{Weighted Stochastic Field Exponent Lebesgue and Sobolev Spaces}

\begin{definition}
We denote the family of all measurable functions $p\left( .,.\right)
:D\times \Omega \longrightarrow \left[ 1,\infty \right) $ (called a
stochastic field exponent). In this paper, the function $p\left( .,.\right) $
always denotes a stochastic field exponent. Moreover, we put 
\begin{equation*}
p^{-}=\underset{(x,t)\in D\times \Omega }{\text{essinf}}p(x,t)\text{, \ \ \
\ \ \ }p^{+}=\underset{(x,t)\in D\times \Omega }{\text{esssup}}p(x,t)\text{.}
\end{equation*}%
A positive, measurable and locally integrable function $\vartheta $ defined
on $D\times \Omega $ is called a weight function. Now, we introduce the
integrability conditions used on the framework of weighted variable Lebesgue
and Sobolev spaces 
\begin{eqnarray*}
(H_{1}) &:&\vartheta \in L_{loc}^{1}\left( D\times \Omega \right) \text{ and 
}\vartheta ^{-\frac{1}{p(.,.)-1}}\in L_{loc}^{1}\left( D\times \Omega \right)
\\
(H_{2}) &:&\vartheta ^{-s(.,.)}\in L^{1}\left( D\times \Omega \right) ,
\end{eqnarray*}%
where $s\left( .,.\right) $ is a positive function. The weighted modular
function $\rho _{p(.,.),\vartheta }$ on $D\times \Omega $ is defined by%
\begin{equation*}
\rho _{p(.,.),\vartheta }\left( u\right) =E\left( \dint\limits_{D}\left\vert
u\left( x,t\right) \right\vert ^{p\left( x,t\right) }\vartheta \left(
x,t\right) dx\right) =\dint\limits_{D\times \Omega }\left\vert u\left(
x,t\right) \right\vert ^{p\left( x,t\right) }\vartheta \left( x,t\right)
d\lambda ,
\end{equation*}%
where $d\lambda =d\lambda \left( x,t\right) =dxdt$.
\end{definition}

The spaces $L_{\vartheta }^{p(.,.)}\left( D\times \Omega \right) $ consist
of all measurable stochastic fields (functions) $u$ on $D\times \Omega $
such that $\dint\limits_{D\times \Omega }\left\vert u(x,t)\right\vert
^{p(x,t)}\vartheta (x,t)d\lambda <\infty $ and endowed with the Luxemburg
norm%
\begin{equation*}
\left\Vert u\right\Vert _{p(.,.),\vartheta }=\inf \left\{ \lambda >0:\rho
_{p(.,.),\vartheta }\left( \frac{u}{\lambda }\right) \leq 1\right\} \text{.}
\end{equation*}%
It is well known that $u\in L_{\vartheta }^{p(.,.)}\left( D\times \Omega
\right) $ if and only if $\left\Vert u\right\Vert _{p(.,.),\vartheta
}=\left\Vert u\vartheta ^{\frac{1}{p(.,.)}}\right\Vert _{p(.,.)}<\infty $.
Moreover, it is clear that, if the inequality $0<C\leq \vartheta $ is
satisfied, then $L_{\vartheta }^{p(.,.)}\left( D\times \Omega \right)
\hookrightarrow L^{p(.,.)}\left( D\times \Omega \right) .$ In this study, we
assume that $1<p^{-}\leq p(.,.)\leq p^{+}<\infty $ and $(H_{1}),(H_{2}).$
Moreover, we use the abbreviations and symbols; a.e., $\longrightarrow $ and 
$\rightharpoonup $ for almost everywhere, strong convergence and weak
convergence, respectively.

It can be seen that the space $L_{\vartheta }^{p(.,.)}\left( D\times \Omega
\right) $ is uniformly convex, so it is reflexive, see \cite{Dien4}.
Moreover, we denote by $L_{\vartheta ^{\ast }}^{q(.,.)}\left( D\times \Omega
\right) $ as the dual space of $L_{\vartheta }^{p(.,.)}\left( D\times \Omega
\right) $ where $\frac{1}{p(.,.)}+\frac{1}{q(.,.)}=1$ and $\vartheta ^{\ast
}=\vartheta ^{1-q(.,.)}.$

Now, we give the relationships between $\left\Vert .\right\Vert
_{p(.,.),\vartheta }$ and $\rho _{p(.,.),\vartheta }$ as follows.

\begin{proposition}
\label{proposition2}If $u\in L_{\vartheta }^{p(.,.)}\left( D\times \Omega
\right) $, then we have

\begin{enumerate}
\item[\textit{(i)}] $\left\Vert u\right\Vert _{p(.,.),\vartheta
}^{p^{-}}\leq \rho _{p(.,.),\vartheta }\left( u\right) \leq \left\Vert
u\right\Vert _{p(.,.),\vartheta }^{p^{+}}$ with $\left\Vert u\right\Vert
_{p(.,.),\vartheta }\geq 1$.

\item[\textit{(ii)}] $\left\Vert u\right\Vert _{p(.,.),\vartheta
}^{p^{+}}\leq \rho _{p(.,.),\vartheta }\left( u\right) \leq \left\Vert
u\right\Vert _{p(.,.),\vartheta }^{p^{-}}$ with $\left\Vert u\right\Vert
_{p(.,.),\vartheta }\leq 1.$
\end{enumerate}
\end{proposition}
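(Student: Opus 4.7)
The plan is to exploit the scaling properties of the weighted modular $\rho_{p(.,.),\vartheta}$. The central fact I would record first is that for any $u \in L_{\vartheta}^{p(.,.)}(D\times\Omega)$ with $\tau := \|u\|_{p(.,.),\vartheta} \in (0,\infty)$, the infimum in the definition of the Luxemburg norm is attained, i.e.\ $\rho_{p(.,.),\vartheta}(u/\tau)=1$. The $\le 1$ direction follows from the definition together with Fatou's lemma applied to a minimizing sequence $\tau_{n}\downarrow\tau$; the reverse follows from the continuity of the map $\tau\mapsto\rho_{p(.,.),\vartheta}(u/\tau)$ on $(0,\infty)$, obtained by dominated convergence with envelope $(1+|u|^{p^{+}})\vartheta\in L^{1}$ (finite because $p^{+}<\infty$ and $u\in L_{\vartheta}^{p(.,.)}$), since otherwise one could slightly decrease $\tau$ and contradict minimality. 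Equivalently, this modular--norm relation transports from its unweighted analog in Tian et al.\ through the isometry $u\mapsto u\vartheta^{1/p(.,.)}$ already noted in the paper.

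Granting this equality, both items reduce to a pointwise comparison between $\tau^{p(x,t)}$ and $\tau^{p^{\pm}}$. For (i), assume $\tau\ge 1$; then $\alpha\mapsto\tau^{\alpha}$ is nondecreasing, and $p^{-}\le p(x,t)\le p^{+}$ a.e.\ gives $\tau^{p^{-}}\le\tau^{p(x,t)}\le\tau^{p^{+}}$ a.e. Writing
\[
\rho_{p(.,.),\vartheta}(u)=\int_{D\times\Omega}\left(\frac{|u(x,t)|}{\tau}\right)^{p(x,t)}\tau^{p(x,t)}\,\vartheta(x,t)\,dx\,dt
\]
and inserting the two pointwise bounds inside the integral yields
\[
\tau^{p^{-}}\rho_{p(.,.),\vartheta}(u/\tau)\le\rho_{p(.,.),\vartheta}(u)\le\tau^{p^{+}}\rho_{p(.,.),\vartheta}(u/\tau),
\]
so substituting $\rho_{p(.,.),\vartheta}(u/\tau)=1$ proves (i). For (ii), assume $0<\tau\le 1$ (the case $\tau=0$ is trivial, forcing $u=0$ a.e.); now $\alpha\mapsto\tau^{\alpha}$ is nonincreasing, reversing the pointwise bounds to $\tau^{p^{+}}\le\tau^{p(x,t)}\le\tau^{p^{-}}$, and the same integration yields $\tau^{p^{+}}\le\rho_{p(.,.),\vartheta}(u)\le\tau^{p^{-}}$, which is (ii).

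The only delicate step is the attainment claim $\rho_{p(.,.),\vartheta}(u/\|u\|_{p(.,.),\vartheta})=1$; the rest is arithmetic on the exponents. I would handle this via the dominated convergence argument above, which is clean under the standing hypothesis $p^{+}<\infty$, or alternatively by invoking the corresponding unweighted result and pulling it back through the identity $\|u\|_{p(.,.),\vartheta}=\|u\vartheta^{1/p(.,.)}\|_{p(.,.)}$. No use of $(H_{1})$, $(H_{2})$, or reflexivity is required for this proposition; only $1<p^{-}\le p(.,.)\le p^{+}<\infty$ and the positivity and measurability of $\vartheta$ enter.
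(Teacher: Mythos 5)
Your argument is correct in substance, but note that the paper itself gives no proof of this proposition: it is stated as a known norm--modular relationship (the authors implicitly rely on the standard unweighted result, e.g.\ in Diening et al.\ or Kov\'a\v{c}ik--R\'akosn\'{\i}k, pulled back through the isometry $u\mapsto u\vartheta^{1/p(.,.)}$ that they record just before the proposition). Your route --- establish the unit-ball property $\rho_{p(.,.),\vartheta}(u/\tau)=1$ at $\tau=\Vert u\Vert_{p(.,.),\vartheta}$ and then squeeze $\tau^{p(x,t)}$ between $\tau^{p^{-}}$ and $\tau^{p^{+}}$ pointwise --- is exactly the standard direct proof, and the reduction of both items to that attainment claim is clean, including your handling of $\tau=0$. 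The one detail to repair is your dominating envelope: $(1+|u|^{p^{+}})\vartheta$ need not be integrable, since neither $\vartheta\in L^{1}(D\times\Omega)$ nor $|u|^{p^{+}}\vartheta\in L^{1}(D\times\Omega)$ is guaranteed (only $|u|^{p(.,.)}\vartheta\in L^{1}$ is, by membership in the space). The fix is immediate: for $\tau$ ranging in a compact subinterval $[a,b]\subset(0,\infty)$ one has $(|u|/\tau)^{p(x,t)}\leq\max\left\{ a^{-p^{-}},a^{-p^{+}}\right\} |u|^{p(x,t)}$, so $C_{a}|u|^{p(x,t)}\vartheta$ is an integrable envelope and the continuity of $\tau\mapsto\rho_{p(.,.),\vartheta}(u/\tau)$ follows; alternatively, invoke the unweighted attainment result for $u\vartheta^{1/p(.,.)}$ and avoid the issue entirely. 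Your closing remark that only $1<p^{-}\leq p^{+}<\infty$ and the positivity of $\vartheta$ are needed (not $(H_{1})$, $(H_{2})$, or reflexivity) is accurate.
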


\begin{theorem}
The inequality 
\begin{equation*}
E\left( \dint\limits_{D}\left\vert f(x,t)g(x,t)\right\vert dx\right) \leq
C\left\Vert f\right\Vert _{p(.,.),\vartheta }\left\Vert g\right\Vert
_{q(.,.),\vartheta ^{\ast }}
\end{equation*}%
holds for every $f\in L_{\vartheta }^{p(.,.)}\left( D\times \Omega \right) $
and $g\in L_{\vartheta ^{\ast }}^{q(.,.)}(D\times \Omega )$ with the
constant $C$ depends on $p(.,.)$ where $\frac{1}{p(.,.)}+\frac{1}{q(.,.)}=1$
and $\vartheta ^{\ast }=\vartheta ^{1-q(.)}$
\end{theorem}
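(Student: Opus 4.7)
The plan is to reduce the weighted inequality to the classical H\"{o}lder inequality on the \emph{unweighted} stochastic field Lebesgue space $L^{p(.,.)}(D\times\Omega)$, which is the version proved in Tian et al. \cite{Tian}. The key device is the factorization identity already recorded in the paper, namely $\|u\|_{p(.,.),\vartheta}=\|u\vartheta^{1/p(.,.)}\|_{p(.,.)}$, together with Fubini's theorem which identifies $E(\int_D\cdot\,dx)$ with $\int_{D\times\Omega}\cdot\,d\lambda$.

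First I would perform the algebraic book-keeping on the exponents. Writing $q(.,.)$ for the pointwise conjugate of $p(.,.)$, one has $\frac{1-q(.,.)}{q(.,.)}=\frac{1}{q(.,.)}-1=-\frac{1}{p(.,.)}$. Consequently, using the definition $\vartheta^{\ast}=\vartheta^{1-q(.,.)}$,
\begin{equation*}
\|g\|_{q(.,.),\vartheta^{\ast}}=\|g(\vartheta^{\ast})^{1/q(.,.)}\|_{q(.,.)}=\|g\,\vartheta^{-1/p(.,.)}\|_{q(.,.)}.
\end{equation*}
In parallel, $\|f\|_{p(.,.),\vartheta}=\|f\,\vartheta^{1/p(.,.)}\|_{p(.,.)}$. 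These identities make sense pointwise a.e.\ thanks to hypothesis $(H_{1})$, which ensures that $\vartheta$ is finite and positive almost everywhere, so the powers $\vartheta^{\pm 1/p(.,.)}$ are well defined measurable stochastic fields.

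Next I would split the integrand and apply the unweighted H\"{o}lder inequality. By Fubini,
\begin{equation*}
E\!\left(\int_{D}|f(x,t)g(x,t)|\,dx\right)=\int_{D\times\Omega}\bigl|f\,\vartheta^{1/p(.,.)}\bigr|\cdot\bigl|g\,\vartheta^{-1/p(.,.)}\bigr|\,d\lambda,
\end{equation*}
and the H\"{o}lder inequality on $L^{p(.,.)}(D\times\Omega)$ (proved in \cite{Tian}) gives a constant $C=C(p(.,.))$ such that the right-hand side is bounded by $C\|f\vartheta^{1/p(.,.)}\|_{p(.,.)}\|g\vartheta^{-1/p(.,.)}\|_{q(.,.)}$. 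Substituting back the two factorization identities yields the claimed inequality. To justify the unweighted H\"{o}lder step one needs both factors to lie in the respective unweighted spaces; this follows from the assumption $f\in L_{\vartheta}^{p(.,.)}(D\times\Omega)$ and $g\in L_{\vartheta^{\ast}}^{q(.,.)}(D\times\Omega)$, which by definition say exactly that $f\vartheta^{1/p(.,.)}\in L^{p(.,.)}$ and $g\vartheta^{-1/p(.,.)}\in L^{q(.,.)}$.

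No genuine obstacle is expected: the only subtlety is the exponent identity $(1-q)/q=-1/p$ and care with the pointwise definition of $\vartheta^{\pm 1/p(.,.)}$ under $(H_{1})$, after which the statement becomes a direct corollary of the unweighted stochastic field H\"{o}lder inequality. The constant $C$ produced is the same one appearing in that unweighted inequality, hence depends only on $p(.,.)$ as stated.
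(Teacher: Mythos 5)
Your proposal is correct and takes essentially the same route as the paper: insert the factor $\vartheta ^{\frac{1}{p(.,.)}}\vartheta ^{-\frac{1}{p(.,.)}}=1$ into the integrand, apply the unweighted H\"{o}lder inequality on $L^{p(.,.)}\left( D\times \Omega \right) $, and identify the resulting factors with the weighted norms via $\left\Vert f\right\Vert _{p(.,.),\vartheta }=\left\Vert f\vartheta ^{\frac{1}{p(.,.)}}\right\Vert _{p(.,.)}$ and $\left( \vartheta ^{\ast }\right) ^{\frac{1}{q(.,.)}}=\vartheta ^{-\frac{1}{p(.,.)}}$. Your write-up is in fact slightly more explicit than the paper's, which leaves the exponent identity $\frac{1-q(.,.)}{q(.,.)}=-\frac{1}{p(.,.)}$ and the membership of the two factors in the unweighted spaces implicit.
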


\begin{proof}
If we consider the H\"{o}lder inequality, then we get 
\begin{eqnarray*}
E\left( \dint\limits_{D}\left\vert f(x,t)g(x,t)\right\vert dx\right) 
&=&E\left( \dint\limits_{D}\left\vert f(x,t)g(x,t)\right\vert \left(
\vartheta (x,t)\right) ^{\frac{1}{p\left( x,t\right) }-\frac{1}{p\left(
x,t\right) }}dx\right)  \\
&\leq &C\left\Vert f\vartheta ^{\frac{1}{p(.,.)}}\right\Vert
_{p(.,.)}\left\Vert g\vartheta ^{-\frac{1}{p(.,.)}}\right\Vert _{q(.,.)}
\end{eqnarray*}%
for some $C>0$. That is the desired result.
\end{proof}

\begin{theorem}
(see \cite{Tian2},\cite{Tian})\label{reflexive}The space $L_{\vartheta
}^{p(.,.)}\left( D\times \Omega \right) $ is a reflexive Banach space with
respect to norm $\left\Vert .\right\Vert _{p(.,.),\vartheta }$.
\end{theorem}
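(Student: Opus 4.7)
The plan is to reduce the assertion to the corresponding known result for the unweighted variable exponent Lebesgue space $L^{p(.,.)}(D\times\Omega)$ via an isometric isomorphism. Define the map
\begin{equation*}
T\colon L_{\vartheta}^{p(.,.)}(D\times\Omega)\longrightarrow L^{p(.,.)}(D\times\Omega),\qquad Tu=u\,\vartheta^{1/p(.,.)}.
\end{equation*}
Since $\vartheta$ is a positive measurable function, $\vartheta^{1/p(.,.)}$ is measurable and $Tu$ is measurable whenever $u$ is; moreover $T$ respects a.e.-equivalence classes because $\vartheta>0$ a.e. The identity already recorded in the paper, namely $\|u\|_{p(.,.),\vartheta}=\|u\,\vartheta^{1/p(.,.)}\|_{p(.,.)}$, shows at once that $T$ is a linear isometry.

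Next I would show that $T$ is surjective. Given $v\in L^{p(.,.)}(D\times\Omega)$, set $u:=v\,\vartheta^{-1/p(.,.)}$; this is measurable for the same reason, and then $Tu=v$ with
\begin{equation*}
\|u\|_{p(.,.),\vartheta}=\|Tu\|_{p(.,.)}=\|v\|_{p(.,.)}<\infty,
\end{equation*}
so $u\in L_{\vartheta}^{p(.,.)}(D\times\Omega)$. Hence $T$ is a surjective linear isometry, i.e.\ an isometric isomorphism of normed spaces.

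Finally I would transfer the conclusion. The variable exponent Lebesgue space $L^{p(.,.)}(D\times\Omega)$ over the product measure space with $1<p^{-}\le p(.,.)\le p^{+}<\infty$ is a reflexive Banach space; this is the standard Kov\'a\v{c}ik--R\'ako\v{s}n\'ik / Diening result cited in the paper as \cite{Dien4} and already invoked in the preceding remark on uniform convexity. Both completeness and reflexivity are preserved under isometric isomorphism, so $L_{\vartheta}^{p(.,.)}(D\times\Omega)$ inherits these properties from $L^{p(.,.)}(D\times\Omega)$, which is exactly the statement of the theorem.

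I do not expect any serious obstacle: the only points requiring care are verifying that multiplication by $\vartheta^{\pm 1/p(.,.)}$ preserves measurability and a.e.-equivalence classes (immediate from positivity and measurability of $\vartheta$) and citing the correct reflexivity result for $L^{p(.,.)}$ on the product space $D\times\Omega$ under the assumption $1<p^{-}\le p^{+}<\infty$. No use of the hypotheses $(H_1)$--$(H_2)$ on $\vartheta$ is needed at this stage, beyond positivity and measurability.
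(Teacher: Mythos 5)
Your proposal is correct, but it does not follow the paper's route, because the paper in fact offers no proof of this theorem: it is stated with a citation to \cite{Tian2} and \cite{Tian}, and the only in-text justification is the earlier remark that $L_{\vartheta }^{p(.,.)}\left( D\times \Omega \right) $ is uniformly convex and hence reflexive (Milman--Pettis), with a pointer to \cite{Dien4}. Your argument is a genuinely different and more self-contained reduction: you exploit the identity $\left\Vert u\right\Vert _{p(.,.),\vartheta }=\left\Vert u\vartheta ^{\frac{1}{p(.,.)}}\right\Vert _{p(.,.)}$, which the paper itself records, to exhibit $u\mapsto u\vartheta ^{1/p(.,.)}$ as a surjective linear isometry onto the unweighted space $L^{p(.,.)}\left( D\times \Omega \right) $, and then transfer completeness and reflexivity. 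This buys two things: you only need the classical facts about unweighted variable exponent spaces under $1<p^{-}\leq p^{+}<\infty $, rather than re-verifying uniform convexity of the weighted modular; and you make explicit that only positivity and measurability of $\vartheta $ are used here, not $(H_{1})$ or $(H_{2})$. The two small points worth spelling out in a final write-up are (a) that completeness and reflexivity are indeed invariant under isometric (or even merely topological) isomorphism of normed spaces, and (b) that the reflexivity of $L^{p(.,.)}\left( D\times \Omega \right) $ over the product measure space is covered by the cited references, since $D\times \Omega $ is $\sigma $-finite so the usual duality and uniform convexity arguments apply. Neither is a gap.
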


\begin{proposition}
The space $L_{\vartheta }^{p(.,.)}\left( D\times \Omega \right) $ is
continuously embedded in $L_{loc}^{1}\left( D\times \Omega \right) $. This
means that every function in $L_{\vartheta }^{p(.,.)}\left( D\times \Omega
\right) $ has distributional (weak) derivative.
\end{proposition}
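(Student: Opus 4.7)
The plan is to use the H\"{o}lder-type inequality established in the previous theorem, paired with the hypothesis $(H_1)$ that guarantees local integrability of $\vartheta^{-1/(p(.,.)-1)}$. The target is to show that for every compact $K\subset D\times\Omega$ there is a constant $C_K>0$ with
\begin{equation*}
\int_{K}|u(x,t)|\,d\lambda \leq C_K\,\|u\|_{p(.,.),\vartheta}
\quad\text{for all } u\in L_{\vartheta}^{p(.,.)}(D\times\Omega).
\end{equation*}
Once this inequality is in hand, continuity of the embedding into $L^{1}_{loc}$ is immediate, and the final sentence of the statement follows from the standard fact that any $L^{1}_{loc}$ function defines a regular distribution and therefore admits weak partial derivatives in $\mathcal{D}'(D\times\Omega)$.

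First I would test $\chi_K$ against the (generalized) H\"{o}lder inequality, writing $\int_K|u|\,d\lambda = E\bigl(\int_D |u(x,t)\chi_K(x,t)|\,dx\bigr)$ and applying the theorem with $f=u$ and $g=\chi_K$. This yields
\begin{equation*}
\int_{K}|u|\,d\lambda \leq C\,\|u\|_{p(.,.),\vartheta}\,\|\chi_K\|_{q(.,.),\vartheta^{\ast}},
\end{equation*}
so everything reduces to verifying $\|\chi_K\|_{q(.,.),\vartheta^{\ast}}<\infty$. Using the identity $\|g\|_{q(.,.),\vartheta^{\ast}}=\|g\,(\vartheta^{\ast})^{1/q(.,.)}\|_{q(.,.)}$ together with $\vartheta^{\ast}=\vartheta^{1-q(.,.)}$ and the conjugate relation $q(p-1)=p$, one checks that the modular $\rho_{q(.,.),\vartheta^{\ast}}(\chi_K/\lambda)$ is controlled by $\int_K \vartheta^{-1/(p(x,t)-1)}\,d\lambda$, up to a factor depending on $\lambda$ and on $q^{\pm}$.

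The key step is then to invoke hypothesis $(H_1)$, which asserts precisely that $\vartheta^{-1/(p(.,.)-1)}\in L^1_{loc}(D\times\Omega)$. This makes the integral over the compact set $K$ finite, so one can choose $\lambda$ large enough to force $\rho_{q(.,.),\vartheta^{\ast}}(\chi_K/\lambda)\leq 1$, giving $\|\chi_K\|_{q(.,.),\vartheta^{\ast}}\leq C_K<\infty$. Combining the two bounds gives the announced inequality with constant $C\cdot C_K$.

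The only mild obstacle is the variable-exponent bookkeeping when passing between the modular and the Luxemburg norm of $\chi_K$: since $q(.,.)$ varies, one must use Proposition \ref{proposition2} (applied to the weighted space with exponent $q$ and weight $\vartheta^{\ast}$) to translate the finite modular bound into a finite norm bound, distinguishing whether $\|\chi_K\|_{q(.,.),\vartheta^{\ast}}$ is $\geq 1$ or $\leq 1$. Beyond this routine dichotomy, the argument is straightforward. Finally, local integrability of $u$ on $D\times\Omega$ implies that the linear functional $\varphi\mapsto\int u\varphi\,d\lambda$ is well defined on $C_c^{\infty}(D\times\Omega)$, hence $u$ has distributional derivatives of every order, completing the proof.
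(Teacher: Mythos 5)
Your argument is correct and follows essentially the same route as the paper: both reduce the claim to bounding $\int_K|u|\,d\lambda$ via the weighted H\"{o}lder inequality (your choice $g=\chi_K$ with weight $\vartheta^{\ast}=\vartheta^{1-q(.,.)}$ is literally the paper's insertion of $\vartheta^{1/p(.,.)}\vartheta^{-1/p(.,.)}$, since $(\vartheta^{\ast})^{1/q}=\vartheta^{-1/p}$), and both then identify the resulting modular with $\int_K\vartheta^{-1/(p(x,t)-1)}\,d\lambda$, which is finite by $(H_1)$. Your extra care in passing from the finite modular to a finite Luxemburg norm by scaling $\lambda$ is a welcome refinement of a step the paper states only as an equivalence.
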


\begin{proof}
Suppose that $u\in L_{\vartheta }^{p(.,.)}\left( D\times \Omega \right) $
and let $K=K_{1}\times K_{2}\subset D\times \Omega $ be a compact set. By
the H\"{o}lder inequality, there exists an $A_{K}>0$ such that 
\begin{eqnarray}
\rho _{p\left( .,.\right) ,K}\left( u\right) &=&E\left(
\dint\limits_{K_{1}}\left\vert u(x,t)\right\vert \vartheta ^{\frac{1}{p(x,t)}%
}\vartheta ^{-\frac{1}{p(x.t)}}dx\right)  \notag \\
&\leq &A_{K}\left\Vert u\vartheta ^{\frac{1}{p(.,.)}}\right\Vert
_{p(.,.),K}\left\Vert \vartheta ^{-\frac{1}{p(.,.)}}\right\Vert _{q(.,.),K}
\label{1yeni}
\end{eqnarray}%
where $\frac{1}{p(.,.)}+\frac{1}{q(.,.)}=1.$ It is obvious that $\left\Vert
\vartheta ^{-\frac{1}{p(.,.)}}\right\Vert _{q(.,.),K}<\infty $ if and only
if $\rho _{q(.,.),K}\left( \vartheta ^{-\frac{1}{p(.,.)}}\right) <\infty .$
Since $\vartheta ^{-\frac{1}{p(.,.)-1}}\in L_{loc}^{1}\left( D\times \Omega
\right) ,$ we have%
\begin{equation}
\rho _{q(.,.),K}(\vartheta ^{-\frac{1}{p(.,.)}})=E\left(
\dint\limits_{K_{1}}\vartheta ^{-\frac{q(x,t)}{p(x,t)}}dx\right)
=\dint\limits_{K}\vartheta ^{-\frac{1}{p(x,t)-1}}d\lambda =B_{K}<\infty .
\label{2}
\end{equation}%
If we use (\ref{1yeni}) and (\ref{2}), then the proof is completed.
\end{proof}

\begin{remark}
If $\vartheta ^{-\frac{1}{p(.,.)-1}}\notin L_{loc}^{1}\left( D\times \Omega
\right) $, then the space $L_{\vartheta }^{p(.,.)}\left( D\times \Omega
\right) $ might not be continuously embedded in $L_{loc}^{1}\left( D\times
\Omega \right) $.
\end{remark}

\begin{theorem}
\label{theorem7}Let $u\in L_{\vartheta }^{p(.,.)}\left( D\times \Omega
\right) $ and $u_{n}\in L_{\vartheta }^{p(.,.)}\left( D\times \Omega \right) 
$ with $\left\Vert u_{n}\right\Vert _{p(.,.),\vartheta }\leq C$ for some $%
C>0 $. If $u_{n}\longrightarrow u$ a.e. in $D\times \Omega $, then $%
u_{n}\rightharpoonup u$ in $L_{\vartheta }^{p(.,.)}\left( D\times \Omega
\right) $.
\end{theorem}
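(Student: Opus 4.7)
The plan is to combine reflexivity with a subsequence-uniqueness argument: bounded sequences in reflexive spaces have weakly convergent subsequences, and I only need to identify every such weak limit with $u$.

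First I would invoke Theorem \ref{reflexive}: since $L_{\vartheta}^{p(.,.)}(D\times \Omega)$ is reflexive and $\{u_n\}$ is norm-bounded, every subsequence of $\{u_n\}$ admits a further subsequence $u_{n_k}\rightharpoonup v$ for some $v \in L_{\vartheta}^{p(.,.)}(D\times \Omega)$. The main task is then to show $v=u$ a.e., after which the standard Urysohn-type subsequence principle gives $u_n\rightharpoonup u$ along the entire sequence.

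To identify the weak limit, I would test against bounded functions supported on a compact set. Fix a compact $K=K_{1}\times K_{2}\subset D\times \Omega$. Using $(H_{1})$ and the computation carried out in the proof of the continuous embedding into $L_{loc}^{1}$, one checks that $\chi_{K}\in L_{\vartheta^{\ast}}^{q(.,.)}(D\times \Omega)$, and hence every bounded measurable $\varphi$ with $\mathrm{supp}\,\varphi \subset K$ lies in the dual space. Weak convergence therefore yields $E\!\left(\int_{K_{1}}u_{n_k}\varphi\,dx\right)\to E\!\left(\int_{K_{1}}v\varphi\,dx\right)$. On the other hand, I would prove directly that $u_n \to u$ in $L^{1}(K)$: by Egoroff's theorem, for any $\delta>0$ there is $E_{\delta}\subset K$ with $\lambda(K\setminus E_{\delta})<\delta$ on which $u_{n}\to u$ uniformly; splitting $\int_{K}|u_{n}-u|\,d\lambda$ over $E_{\delta}$ and its complement, the uniform convergence handles the first piece while the H\"{o}lder inequality together with the norm bound $\|u_n\|_{p(.,.),\vartheta}\le C$ gives
\begin{equation*}
\int_{K\setminus E_{\delta}}|u_{n}|\,d\lambda \le C'\,\|u_n\|_{p(.,.),\vartheta}\,\|\vartheta^{-\frac{1}{p(.,.)}}\chi_{K\setminus E_{\delta}}\|_{q(.,.)},
\end{equation*}
which tends to $0$ as $\delta\to 0$ by absolute continuity of the integral $\int \vartheta^{-1/(p(.,.)-1)}\,d\lambda$ secured by $(H_{1})$. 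Taking $n\to\infty$ then $\delta\to 0$ proves $u_{n}\to u$ in $L^{1}(K)$, whence $E\!\left(\int_{K_{1}}u_{n_k}\varphi\,dx\right)\to E\!\left(\int_{K_{1}}u\varphi\,dx\right)$ as well.

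Comparing the two limits, $\int_{D\times \Omega}(v-u)\varphi\,d\lambda=0$ for every bounded $\varphi$ compactly supported in $D\times \Omega$, so $v=u$ a.e., and the subsequence principle finishes the argument. The main obstacle is the identification step: reflexivity gives a weak limit for free, but pinning it down requires the absolute-continuity / Egoroff splitting above, which is where the weight hypothesis $(H_{1})$ is essential — without it, sequences bounded in $L_{\vartheta}^{p(.,.)}$ need not converge in $L^{1}_{loc}$ and the weak limit cannot be identified with the pointwise one.
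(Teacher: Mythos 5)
Your proof is correct, but it is organized differently from the paper's. The paper does not extract subsequences at all: it takes an arbitrary $g\in L_{\vartheta ^{\ast }}^{q(.,.)}\left( D\times \Omega \right) $ and verifies directly that $E\left( \int_{D}u_{n}g\,dx\right) \longrightarrow E\left( \int_{D}ug\,dx\right) $, by choosing $\delta $ so that $\left\Vert g\chi _{K}\right\Vert _{q(.,.),\vartheta ^{\ast }}$ is small (absolute continuity of the integral of $\left\vert g\right\vert ^{q(.,.)}\vartheta ^{\ast }$), applying Egorov to get uniform convergence off a set $K$ of measure less than $\delta $, and splitting the pairing into the two pieces via H\"{o}lder and the uniform bound $\left\Vert u_{n}\right\Vert _{p(.,.),\vartheta }\leq C$ (plus a Fatou argument to get $\left\Vert u\right\Vert _{p(.,.),\vartheta }\leq C$). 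You instead use reflexivity to obtain subsequential weak limits, identify each limit with $u$ by proving $u_{n}\longrightarrow u$ in $L^{1}(K)$ for compact $K$ (again via Egorov, H\"{o}lder, and the absolute continuity of $\int \vartheta ^{-1/(p(.,.)-1)}d\lambda $ guaranteed by $(H_{1})$), and conclude by the subsequence-uniqueness principle. The underlying estimate is the same Egorov--H\"{o}lder splitting, but the architectures differ: your route only needs to test against the separating family of bounded compactly supported functions and makes explicit exactly where reflexivity and $(H_{1})$ enter, at the cost of the extra subsequence bookkeeping; the paper's route is more direct but leans on the identification of the full dual of $L_{\vartheta }^{p(.,.)}\left( D\times \Omega \right) $ with $L_{\vartheta ^{\ast }}^{q(.,.)}\left( D\times \Omega \right) $ and applies the smallness argument to each dual element $g$ individually. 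Both are valid; your observation that $\chi _{K}\in L_{\vartheta ^{\ast }}^{q(.,.)}$ follows from $(H_{1})$ (since $\vartheta ^{1-q(.,.)}=\vartheta ^{-1/(p(.,.)-1)}$ is locally integrable) is the right justification for the testing step.
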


\begin{proof}
Since the space $L_{\vartheta }^{p(.,.)}\left( D\times \Omega \right) $ is
reflexive by Theorem \ref{reflexive}, we only need to show that \ 
\begin{equation*}
E\left( \dint\limits_{D}u_{n}gdx\right) \longrightarrow E\left(
\dint\limits_{D}ugdx\right)
\end{equation*}%
for each $g\in L_{\vartheta ^{\ast }}^{q(.,.)}(D\times \Omega )$ where $%
\frac{1}{p(.,.)}+\frac{1}{q(.,.)}=1$ and $\vartheta ^{\ast }=\vartheta
^{1-q(.,.)}.$ It is well known that $\left\Vert u_{n}\right\Vert
_{p(.,.),\vartheta }\leq C$ if and only if $\rho _{p(.,.),\vartheta }\left( 
\frac{u_{n}}{C}\right) \leq 1$ for each $n\in 
\mathbb{N}
$. This follows by the Fatou's Lemma and the definition of the norm that%
\begin{equation*}
E\left( \dint\limits_{D}\left\vert \frac{u}{C}\right\vert ^{p(x,t)}\vartheta
(x,t)dx\right) \leq \underset{n\longrightarrow \infty }{\lim \inf }E\left(
\dint\limits_{D}\left\vert \frac{u_{n}}{C}\right\vert ^{p(x,t)}\vartheta
(x,t)dx\right) \leq 1.
\end{equation*}%
Thus, we get $\left\Vert u\right\Vert _{p(.,.),\vartheta }\leq C$. By the
absolute continuity of the Lebesgue integral, we have%
\begin{equation*}
\lim_{meas(K)\longrightarrow 0}\dint\limits_{D\times \Omega }\left\vert
g\chi _{K}\right\vert ^{q(x,t)}\vartheta (x,t)d\lambda =0
\end{equation*}%
where $g\in L_{\vartheta ^{\ast }}^{q(.,.)}(D\times \Omega )$ and $K\subset
D\times \Omega $. This yields that $\underset{meas(K)\longrightarrow 0}{\lim 
}\left\Vert g\chi _{K}\right\Vert _{q(.,.),\vartheta ^{\ast }}=0$, and there
exists a $\delta >0$ such that 
\begin{equation}
\left\Vert g\chi _{K}\right\Vert _{q(.,.),\vartheta ^{\ast }}<\frac{%
\varepsilon }{4C}\left( 1+\frac{1}{p^{-}}-\frac{1}{p^{+}}\right) ^{-1}
\label{3}
\end{equation}%
for $meas(K)<\delta .$ By the Egorov theorem, there exists a set $L\subset
D\times \Omega $ such that $u_{n}\longrightarrow u$ uniformly on $L$ with $%
meas\left( \left( D\times \Omega \right) -L\right) <\delta $. If we choose $%
n_{0}$ such that $n\geq n_{0}$, then we have 
\begin{equation}
\max_{(x,t)\in L}\left\vert u_{n}-u\right\vert \left\Vert g\right\Vert
_{q(.,.),\vartheta ^{\ast }}\left\Vert \chi _{L}\right\Vert
_{p(.,.),\vartheta }\left( 1+\frac{1}{p^{-}}-\frac{1}{p^{+}}\right) <\frac{%
\varepsilon }{2}.  \label{4}
\end{equation}%
Let us denote $K=\left( D\times \Omega \right) -L.$ By (\ref{3}) and (\ref{4}%
), we have 
\begin{eqnarray*}
&&\left\vert E\left( \dint\limits_{D}u_{n}gdx\right) -E\left(
\dint\limits_{D}ugdx\right) \right\vert \\
&\leq &\tint\limits_{L}\left\vert u_{n}-u\right\vert \left\vert g\right\vert
d\lambda +\tint\limits_{K}\left\vert u_{n}-u\right\vert \left\vert
g\right\vert d\lambda \\
&\leq &\max_{(x,t)\in L}\left\vert u_{n}-u\right\vert E\left(
\dint\limits_{D}\left\vert g\chi _{L}\right\vert dx\right) +E\left(
\dint\limits_{D}\left\vert u_{n}-u\right\vert \left\vert g\chi
_{K}\right\vert dx\right) \\
&\leq &\max_{(x,t)\in L}\left\vert u_{n}-u\right\vert \left\Vert
g\right\Vert _{q(.,.),\vartheta ^{\ast }}\left\Vert \chi _{L}\right\Vert
_{p(.,.),\vartheta }\left( 1+\frac{1}{p^{-}}-\frac{1}{p^{+}}\right) \\
&&+\left\Vert u_{n}-u\right\Vert _{p(.,.),\vartheta }\left\Vert g\chi
_{K}\right\Vert _{q(.,.),\vartheta ^{\ast }}\left( 1+\frac{1}{p^{-}}-\frac{1%
}{p^{+}}\right) \\
&<&\varepsilon .
\end{eqnarray*}%
That is the desired result.
\end{proof}

\begin{definition}
We set the weighted stochastic field variable Sobolev spaces $W_{\vartheta
}^{k,p(.,.)}\left( D\times \Omega \right) $ by%
\begin{equation*}
W_{\vartheta }^{k,p(.,.)}\left( D\times \Omega \right) =\left\{ u\in
L_{\vartheta }^{p(.,.)}\left( D\times \Omega \right) :D^{\alpha }u\in
L_{\vartheta }^{p(.,.)}\left( D\times \Omega \right) ,0\leq \left\vert
\alpha \right\vert \leq k\right\}
\end{equation*}%
equipped with the norm 
\begin{equation*}
\left\Vert u\right\Vert _{W_{\vartheta }^{k,p(.,.)}\left( D\times \Omega
\right) }=\dsum\limits_{0\leq \left\vert \alpha \right\vert \leq
k}\left\Vert D^{\alpha }u\right\Vert _{p(.,.),\vartheta },
\end{equation*}%
where $\alpha \in 
\mathbb{N}
_{0}^{d}$ is a multi-index, $\left\vert \alpha \right\vert =\alpha
_{1}+\alpha _{2}+...+\alpha _{d}$ and $D^{\alpha }=\frac{\partial
^{\left\vert \alpha \right\vert }}{\partial _{x_{1}}^{\alpha
_{1}}...\partial _{x_{d}}^{\alpha _{d}}}$. The space $\left( W_{\vartheta
}^{k,p(.,.)}\left( D\times \Omega \right) ,\left\Vert .\right\Vert
_{W_{\vartheta }^{k,p(.,.)}\left( D\times \Omega \right) }\right) $ is a
reflexive Banach space by \cite[Theorem 2.5]{Tian}.

Moreover, the space $W_{\vartheta }^{1,p(.,.)}\left( D\times \Omega \right) $
is defined by 
\begin{equation*}
W_{\vartheta }^{1,p(.,.)}\left( D\times \Omega \right) =\left\{ u\in
L_{\vartheta }^{p(.,.)}\left( D\times \Omega \right) :\left\vert \nabla
u\right\vert \in L_{\vartheta }^{p(.,.)}\left( D\times \Omega \right)
\right\}
\end{equation*}%
with the norm $\left\Vert u\right\Vert _{W_{\vartheta }^{1,p(.,.)}\left(
D\times \Omega \right) }=\left\Vert u\right\Vert _{p(.,.),\vartheta
}+\left\Vert \nabla u\right\Vert _{p(.,.),\vartheta }$.

The space $W_{0,\vartheta }^{1,p(.,.)}\left( D\times \Omega \right) $ is the
closure of 
\begin{equation*}
C\left( D\times \Omega \right) =\left\{ u:u\left( .,t\right) \in
C_{0}^{\infty }\left( D\right) \text{ for each }t\in \Omega \right\}
\end{equation*}%
in $W_{\vartheta }^{1,p(.,.)}\left( D\times \Omega \right) $. Also, it is
obvious that $C\left( D\times \Omega \right) $ is a subspace of $%
W_{0,\vartheta }^{1,p(.,.)}\left( D\times \Omega \right) $, and the dual
space of $W_{0,\vartheta }^{1,p(.,.)}\left( D\times \Omega \right) $ is $%
W_{0,\vartheta ^{\ast }}^{-1,q(.,.)}\left( D\times \Omega \right) $, where $%
\frac{1}{p(.,.)}+\frac{1}{q(.,.)}=1$ and $\vartheta ^{\ast }=\vartheta
^{1-q(.,.)}$.

The following theorem present the Poincar\'{e} inequality for the weighted
Sobolev spaces $W_{0,\vartheta }^{k,p(.,.)}\left( D\times \Omega \right) $.
For the proof, we refer \cite{Unal1}.
\end{definition}

\begin{remark}
If we use the similar method in \cite[Theorem 7]{Unal1}, then we have the
Poincar\'{e} inequality for $W_{0,\vartheta }^{k,p(.,.)}\left( D\times
\Omega \right) ,$ that is, there exists a $C>0$ such that the inequality%
\begin{equation}
\left\Vert u\right\Vert _{p(.,.),\vartheta }\leq C\left\Vert \nabla
u\right\Vert _{p(.,.),\vartheta }  \label{5}
\end{equation}%
holds for every $u\in W_{0,\vartheta }^{k,p(.,.)}\left( D\times \Omega
\right) $ (or $u\in C\left( D\times \Omega \right) $).
\end{remark}

Therefore, the space $W_{0,\vartheta }^{1,p(.,.)}\left( D\times \Omega
\right) $ equipped with the norm 
\begin{equation*}
\left\Vert \left\vert u\right\vert \right\Vert _{W_{0,\vartheta
}^{1,p(.,.)}\left( D\times \Omega \right) }=\left\Vert \nabla u\right\Vert
_{p(.,.),\vartheta }
\end{equation*}%
for $u\in W_{0,\vartheta }^{1,p(.,.)}\left( D\times \Omega \right) .$ It is
note that the norms $\left\Vert .\right\Vert _{W_{\vartheta
}^{1,p(.,.)}\left( D\times \Omega \right) }$ and $\left\Vert \left\vert
.\right\vert \right\Vert _{W_{0,\vartheta }^{1,p(.,.)}\left( D\times \Omega
\right) }$ are equivalent on $W_{\vartheta }^{1,p(.,.)}\left( D\times \Omega
\right) .$ Then, $W_{\vartheta }^{1,p(.,.)}\left( D\times \Omega \right) $
is continuously embedded in $L_{\vartheta }^{p(.,.)}\left( D\times \Omega
\right) $ if and only if the inequality (\ref{5}) is satisfied for every $%
u\in W_{0,\vartheta }^{1,p(.,.)}\left( D\times \Omega \right) .$

\section{Compact Embedding Theorems}

In this section, we present several compact embeddings between the weighted
stochastic field variable Lebesgue and Sobolev spaces. Because, we need
these embeddings to investigate weak solutions of stochastic partial
differential equation (\ref{1}). Now, let us introduce the function $p^{\ast
}\left( .,.\right) $ and $p_{s}\left( .,.\right) $ defined by%
\begin{equation*}
p^{\ast }\left( x,t\right) =\left\{ 
\begin{array}{cc}
\frac{dp(x,t)}{d-p(x,t)} & \text{if }p\left( x,t\right) <d, \\ 
\infty & \text{if }p\left( x,t\right) \geq d,%
\end{array}%
\right. ,
\end{equation*}%
\begin{equation*}
p_{s}\left( x,t\right) =\frac{p(x,t)s(x,t)}{s(x,t)+1}<p(x,t),
\end{equation*}%
and we have 
\begin{equation*}
p_{s}^{\ast }\left( x,t\right) =\left\{ 
\begin{array}{cc}
\frac{dp_{s}(x,t)}{d\left( s(x,t)+1\right) -p(x,t)s(x,t)} & \text{if }%
p_{s}\left( x,t\right) <d, \\ 
\text{arbitrary} & \text{if }p_{s}\left( x,t\right) \geq d,%
\end{array}%
\right.
\end{equation*}%
for almost all $(x,t)\in D\times \Omega $.

\begin{proposition}
(see \cite{Mas})\label{proposition}Assume that the boundary of $D\times
\Omega $ possesses the cone property and $p\left( .,.\right) \in C\left( 
\overline{D\times \Omega }\right) $. If $q(.,.)\in C\left( \overline{D\times
\Omega }\right) $ and $1\leq q(x,t)\leq p^{\ast }\left( x,t\right) $ for $%
(x,t)\in \overline{D\times \Omega }$, then $W^{1,p(.,.)}\left( D\times
\Omega \right) $ is compactly embedded in $L^{q(.,.)}\left( D\times \Omega
\right) $.
\end{proposition}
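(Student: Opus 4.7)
The plan is to reduce the variable-exponent compact embedding to the classical Rellich--Kondrachov theorem by a localization argument, exploiting the continuity of $p(.,.)$ and $q(.,.)$ on the compact set $\overline{D\times\Omega}$ together with the cone property. Let $\{u_n\}$ be a bounded sequence in $W^{1,p(.,.)}(D\times\Omega)$; our goal is to extract a subsequence converging in $L^{q(.,.)}(D\times\Omega)$. Note that we may treat $D\times\Omega$ as a bounded subset of some Euclidean-like product space, and rely on the fact that for bounded sequences the norm $\|\cdot\|_{p(.,.)}$ and the modular $\rho_{p(.,.)}$ generate the same topology (by Proposition~\ref{proposition2}).

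First I would build a finite open cover $\{U_i\}_{i=1}^{N}$ of $\overline{D\times\Omega}$, each piece satisfying the cone property, such that on $\overline{U_i}$
\[
p_i^- := \inf_{U_i} p \leq p(x,t) \leq \sup_{U_i} p =: p_i^+,\qquad q_i^- := \inf_{U_i} q \leq q(x,t) \leq \sup_{U_i} q =: q_i^+,
\]
with the oscillations of $p$ and $q$ on $U_i$ so small (using uniform continuity on the compact $\overline{D\times\Omega}$) that $q_i^+ < (p_i^-)^*$ whenever $p_i^- < d$; the case $p_i^- \geq d$ is easier since then $(p_i^-)^* = \infty$.

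On each patch $U_i$, since $|U_i| < \infty$ and the exponents are constant-bounded there, the continuous embedding $L^{p(.,.)}(U_i)\hookrightarrow L^{p_i^-}(U_i)$ (and likewise for the gradient) gives $W^{1,p(.,.)}(U_i)\hookrightarrow W^{1,p_i^-}(U_i)$. Classical Rellich--Kondrachov then yields $W^{1,p_i^-}(U_i)\hookrightarrow\hookrightarrow L^{q_i^+}(U_i)$ compactly, and $L^{q_i^+}(U_i)\hookrightarrow L^{q(.,.)}(U_i)$ continuously since $q(.,.)\leq q_i^+$ on $U_i$. A diagonal extraction across the finite cover produces a subsequence $\{u_{n_k}\}$ such that $u_{n_k}\to u$ in $L^{q(.,.)}(U_i)$ for every $i$. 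Summing the modular contributions from each patch (using a partition of unity subordinate to $\{U_i\}$, or simply the finite cover directly), we conclude $\rho_{q(.,.)}(u_{n_k}-u)\to 0$ on $D\times\Omega$, hence norm convergence in $L^{q(.,.)}(D\times\Omega)$.

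The main obstacle I anticipate lies in matching the local constant-exponent Sobolev embeddings with the global variable-exponent topology: one has to verify that the local estimates combine cleanly when the exponents vary. The critical threshold condition $q_i^+ < (p_i^-)^*$ on each patch requires a careful choice of the cover diameter relative to the modulus of continuity of $p$ and $q$, taking into account that $p^*$ is not itself uniformly continuous near the level set $\{p=d\}$; treating points where $p(x,t)=d$ requires choosing $U_i$ small enough that $p_i^-$ is close to $d$ and invoking embedding into $L^r$ for arbitrarily large $r$. Once the cover is well-chosen the remainder is essentially a bookkeeping exercise combining the modular equivalence from Proposition~\ref{proposition2}.
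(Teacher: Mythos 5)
The paper offers no proof of this proposition; it is quoted from \cite{Mas}, which concerns $W^{1,p(x)}$ on a bounded Euclidean domain. Your localization strategy is the standard one in that Euclidean setting, but it does not transfer to the statement as written here, and the point where it breaks is precisely the step you gloss over with ``we may treat $D\times\Omega$ as a bounded subset of some Euclidean-like product space.'' In this paper $\Omega$ is an abstract probability space and, more importantly, the Sobolev norm on $W^{1,p(.,.)}(D\times\Omega)$ controls only the $x$-derivatives ($D^{\alpha}=\partial^{|\alpha|}/\partial x_1^{\alpha_1}\cdots\partial x_d^{\alpha_d}$); no derivative in $t$ appears. Classical Rellich--Kondrachov on a patch $U_i\subset D\times\Omega$ requires control of the full gradient in all variables of the ambient space, so it cannot be invoked. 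Indeed, without $t$-derivatives the claimed compactness fails outright: take $\Omega=[0,1]$ with Lebesgue measure, $p\equiv q\equiv 2$, $\phi\in C_0^{\infty}(D)$, and $u_n(x,t)=\phi(x)\sin(2\pi nt)$. This sequence is bounded in $W^{1,2}(D\times\Omega)$, converges weakly to $0$ in $L^{2}(D\times\Omega)$, yet $\|u_n\|_{L^2}$ is bounded away from $0$, so no subsequence converges strongly. Your argument can only deliver compactness in the $x$-variable for (almost every) fixed $t$, or for the full product if one adds $t$-regularity to the norm; as a proof of the proposition in the form stated, it has an unfillable hole at the Rellich--Kondrachov step.

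A second, independent problem is the threshold condition. The hypothesis is stated with $q(x,t)\leq p^{\ast}(x,t)$, and your cover construction needs $q_i^{+}<(p_i^{-})^{\ast}$ on every patch. At a point where $q=p^{\ast}$ this is unattainable: shrinking $U_i$ drives both $q_i^{+}$ and $(p_i^{-})^{\ast}$ to the common value $p^{\ast}(x_0,t_0)$, and compactness genuinely fails at the critical exponent even for constant $p$ and $q$. The correct hypothesis (the one actually in \cite{Mas} and the standard variable-exponent literature) is the strict inequality $q(x,t)<p^{\ast}(x,t)$ on $\overline{D\times\Omega}$, which by continuity and compactness yields a uniform gap $\inf(p^{\ast}-q)>0$ and makes your choice of cover diameter possible; your write-up silently assumes this strict version. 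Finally, a smaller technical point: you assert that each $U_i\cap(D\times\Omega)$ ``satisfies the cone property,'' but intersecting a cone-property domain with small open sets does not automatically preserve that property, so this needs either an extension operator or a more careful choice of the cover.
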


\begin{theorem}
\label{theorem14}Assume that the boundary of $D\times \Omega $ possesses the
cone property, $p(.,.)\in C\left( \overline{D\times \Omega }\right) $ and $%
1<p(x,t)$ for all $(x,t)\in \overline{D\times \Omega }$. Suppose that

\begin{enumerate}
\item[\textit{(i)}] $0<\vartheta (x,t)\in L^{\alpha (.,.)}\left( D\times
\Omega \right) $ with $(x,t)\in D\times \Omega $, $\alpha (.,.)\in C\left( 
\overline{D\times \Omega }\right) $ and $1<\alpha ^{-}$.

\item[\textit{(ii)}] $1<q(x,t)<\frac{\alpha (x,t)-1}{\alpha (x,t)}p^{\ast
}\left( x,t\right) $ for all $(x,t)\in \overline{D\times \Omega }$.
\end{enumerate}

Then, there is a compact embedding from $W^{1,p(.,.)}\left( D\times \Omega
\right) $ to $L_{\vartheta }^{q(.,.)}(D\times \Omega ).$
\end{theorem}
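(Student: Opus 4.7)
The plan is to reduce the weighted compact embedding to the unweighted compact embedding in Proposition \ref{proposition} via a variable exponent Hölder argument. The idea is that the weight $\vartheta$ costs us some integrability, but condition (ii) leaves exactly enough room in the Sobolev exponent $p^{\ast}$ to absorb this loss.

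First, I would introduce the auxiliary exponent
\begin{equation*}
r(x,t)=q(x,t)\,\frac{\alpha (x,t)}{\alpha (x,t)-1}.
\end{equation*}
By hypothesis (ii), $r(x,t)<p^{\ast }(x,t)$ on $\overline{D\times \Omega }$, and $r\in C(\overline{D\times \Omega })$ follows from the continuity of $q,\alpha ,p$ and $\alpha ^{-}>1$. With the cone property in hand, Proposition \ref{proposition} gives a compact embedding $W^{1,p(.,.)}(D\times \Omega )\hookrightarrow \hookrightarrow L^{r(.,.)}(D\times \Omega )$.

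Second, I would prove the key pointwise-to-norm estimate: for every $u\in L^{r(.,.)}(D\times \Omega )$,
\begin{equation*}
\|u\|_{q(.,.),\vartheta }\leq C\bigl(1+\|u\|_{r(.,.)}^{q^{+}/r^{-}}+\|u\|_{r(.,.)}^{q^{-}/r^{+}}\bigr)^{1/q^{-}},
\end{equation*}
or more conveniently the variant phrased for $\|u\|_{r(.,.)}\leq 1$. To do this I would apply the variable exponent Hölder inequality with the conjugate pair $\alpha (.,.),\alpha ^{\prime }(.,.)=\alpha (.,.)/(\alpha (.,.)-1)$ to the product $|u|^{q(x,t)}\cdot \vartheta (x,t)$, getting
\begin{equation*}
\int_{D\times \Omega }|u|^{q(x,t)}\vartheta \,d\lambda \leq C\,\bigl\| |u|^{q(.,.)}\bigr\|_{\alpha ^{\prime }(.,.)}\,\|\vartheta \|_{\alpha (.,.)}.
\end{equation*}
Since $q(.,.)\alpha ^{\prime }(.,.)=r(.,.)$, I would then convert $\bigl\| |u|^{q(.,.)}\bigr\|_{\alpha ^{\prime }(.,.)}$ into a power of $\|u\|_{r(.,.)}$ by passing to the modular, using Proposition \ref{proposition2} in both directions (once for the $\alpha ^{\prime }$-norm on $|u|^{q}$ and once for the $r$-norm on $u$). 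Combined with $\|\vartheta \|_{\alpha (.,.)}<\infty $ from (i), this yields the continuous embedding $L^{r(.,.)}(D\times \Omega )\hookrightarrow L_{\vartheta }^{q(.,.)}(D\times \Omega )$.

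Finally, chaining the compact embedding into $L^{r(.,.)}$ with the continuous embedding into $L_{\vartheta }^{q(.,.)}$ gives the conclusion: a bounded sequence $(u_{n})\subset W^{1,p(.,.)}(D\times \Omega )$ has a subsequence converging strongly in $L^{r(.,.)}$, and the displayed estimate applied to $u_{n}-u$ forces convergence in $L_{\vartheta }^{q(.,.)}$ as well.

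The main obstacle I expect is Step 2, namely the careful variable-exponent bookkeeping that converts the mixed-norm Hölder estimate into a clean $L^{r(.,.)}$ bound. Because $q$, $\alpha $, and $\alpha ^{\prime }$ all vary with $(x,t)$, the exponent $q(x,t)$ appearing in $\bigl\| |u|^{q(.,.)}\bigr\|_{\alpha ^{\prime }(.,.)}$ cannot simply be pulled outside the norm; one has to split into the cases $\|u\|_{r(.,.)}\leq 1$ and $\|u\|_{r(.,.)}\geq 1$ and use the correct branch of Proposition \ref{proposition2}. Everything else, including continuity of the embedding and the Cantor-type subsequence extraction, is routine once the estimate is in place.
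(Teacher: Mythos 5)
Your proposal is correct and follows essentially the same route as the paper: introduce $r(.,.)=\alpha(.,.)q(.,.)/(\alpha(.,.)-1)$, invoke Proposition \ref{proposition} for the compact embedding $W^{1,p(.,.)}\hookrightarrow\hookrightarrow L^{r(.,.)}$, and use the variable-exponent H\"{o}lder inequality on $|u|^{q(.,.)}\cdot\vartheta$ to pass to $L_{\vartheta}^{q(.,.)}$. The only cosmetic difference is that you package the last step as a standalone continuous embedding $L^{r(.,.)}\hookrightarrow L_{\vartheta}^{q(.,.)}$ with an explicit norm estimate, whereas the paper argues directly on a weakly null sequence via the equivalence of norm and modular convergence; the mathematics is the same.
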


\begin{proof}
For the proof, we use similar method in \cite[Theorem 2.1]{Fan}. Let $u\in
W^{1,p(.,.)}\left( D\times \Omega \right) $ and set $r(x,t)=\frac{\alpha
(x,t)}{\alpha (x,t)-1}q(x,t)=\alpha _{0}(x,t)q(x,t)$. Then \textit{(ii)}
implies $r(x,t)<p^{\ast }\left( x,t\right) .$ This follows that $%
W^{1,p(.,.)}\left( D\times \Omega \right) \hookrightarrow \hookrightarrow
L^{r(.,.)}\left( D\times \Omega \right) $ by Proposition \ref{proposition}.
Moreover, for $u\in W^{1,p(.,.)}\left( D\times \Omega \right) ,$ we get $%
\left\vert u\right\vert ^{q(x,t)}\in L^{\alpha _{0}(x,t)}\left( D\times
\Omega \right) $. By the H\"{o}lder inequality%
\begin{equation*}
E\left( \dint\limits_{D}\vartheta (x,t)\left\vert u\right\vert
^{q(x,t)}dx\right) \leq \left( 1+\frac{1}{p^{-}}-\frac{1}{p^{+}}\right)
\left\Vert \vartheta \right\Vert _{\alpha (.,.)}\left\Vert \left\vert
u\right\vert ^{q(.,.)}\right\Vert _{\alpha _{0}(.,.)}<\infty .
\end{equation*}%
Therefore, we have $W^{1,p(.,.)}\left( D\times \Omega \right) \subset
L_{\vartheta }^{q(.,.)}(D\times \Omega ).$ Now, let $\left( u_{n}\right)
_{n\in 
\mathbb{N}
}\subset W^{1,p(.,.)}\left( D\times \Omega \right) $ and $%
u_{n}\rightharpoonup 0$ in $W^{1,p(.,.)}\left( D\times \Omega \right) $.
Since $W^{1,p(.,.)}\left( D\times \Omega \right) $ is compactly embedded in $%
L^{r(.,.)}\left( D\times \Omega \right) $ by the Proposition \ref%
{proposition}, we get $u_{n}\longrightarrow 0$ in $L^{r(.,.)}\left( D\times
\Omega \right) $, that is, $\left\Vert u_{n}\right\Vert
_{r(.,.)}\longrightarrow 0.$ This yields $\rho
_{r(.,.)}(u_{n})\longrightarrow 0$ and 
\begin{equation*}
E\left( \dint\limits_{D}\left\vert u_{n}(x,t)\right\vert ^{r(x,t)}dx\right)
=\rho _{\alpha _{0}(.,.)}(\left\vert u_{n}(x,t)\right\vert
^{q(x,t)})\longrightarrow 0
\end{equation*}%
or equivalently%
\begin{equation*}
\left\Vert \left\vert u_{n}\right\vert ^{q(.,.)}\right\Vert _{\alpha
_{0}(.,.)}\longrightarrow 0.
\end{equation*}%
Thus, we have 
\begin{equation*}
E\left( \dint\limits_{D}\vartheta (x,t)\left\vert u_{n}\right\vert
^{q(x,t)}dx\right) \leq \left( 1+\frac{1}{p^{-}}-\frac{1}{p^{+}}\right)
\left\Vert \vartheta \right\Vert _{\alpha (.,.)}\left\Vert \left\vert
u_{n}\right\vert ^{q(.,.)}\right\Vert _{\alpha _{0}(.,.)}\longrightarrow 0
\end{equation*}%
which implies $\left\Vert u_{n}\right\Vert _{q(.,.),\vartheta
}\longrightarrow 0.$ That is the desired result.
\end{proof}

\begin{theorem}
\label{theorem12}Let all conditions in Proposition \ref{proposition} be
hold. Moreover, assume that the assumptions in Theorem \ref{theorem14}
replacing $p\left( .,.\right) $ by $p_{s}\left( .,.\right) $ are also
satisfied. Thus we obtain 
\begin{equation*}
W_{\vartheta }^{1,p(.,.)}\left( D\times \Omega \right) \hookrightarrow
\hookrightarrow L_{\vartheta }^{r(.,.)}\left( D\times \Omega \right) .
\end{equation*}%
where $r(x,t)\leq p_{s}^{\ast }\left( x,t\right) $ for all $\left(
x,t\right) \in D\times \Omega $ and $0<C\leq \vartheta $.
\end{theorem}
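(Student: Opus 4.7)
The plan is to factor the desired compact embedding through the unweighted Sobolev space $W^{1,p_s(.,.)}(D\times\Omega)$, writing
\[
W_{\vartheta}^{1,p(.,.)}(D\times\Omega)\;\hookrightarrow\; W^{1,p_s(.,.)}(D\times\Omega)\;\hookrightarrow\hookrightarrow\; L_\vartheta^{r(.,.)}(D\times\Omega),
\]
where the first arrow is a continuous embedding furnished by hypothesis $(H_2)$, and the second is the compact embedding provided by Theorem~\ref{theorem14} applied with $p(.,.)$ replaced by $p_s(.,.)$ and $q(.,.)$ replaced by $r(.,.)$.

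The crucial first step is to establish $L_\vartheta^{p(.,.)}(D\times\Omega) \hookrightarrow L^{p_s(.,.)}(D\times\Omega)$ continuously, together with the analogous statement for weak derivatives. Starting from the pointwise identity $|u|^{p_s}=(|u|^{p}\vartheta)^{p_s/p}\cdot\vartheta^{-p_s/p}$ and noting that $p_s/p = s/(s+1)$, so the positive numbers $(s+1)/s$ and $s+1$ are H\"older-conjugate at every point, I would apply Young's inequality pointwise to obtain
\[
|u(x,t)|^{p_s(x,t)} \;\leq\; \tfrac{s(x,t)}{s(x,t)+1}\,|u(x,t)|^{p(x,t)}\vartheta(x,t) \;+\; \tfrac{1}{s(x,t)+1}\,\vartheta(x,t)^{-s(x,t)}.
\]
Integrating over $D\times\Omega$ and invoking $(H_2)$ yields the modular estimate $\rho_{p_s(.,.)}(u)\le \rho_{p(.,.),\vartheta}(u)+\int_{D\times\Omega}\vartheta^{-s(.,.)}\,d\lambda$. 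A standard scaling argument combined with Proposition~\ref{proposition2} then converts this modular inequality into a norm inequality of the form $\|u\|_{p_s(.,.)}\leq C\bigl(1+\|u\|_{p(.,.),\vartheta}\bigr)$. Applying the same reasoning to $|\nabla u|$ (and more generally to each $D^{\alpha}u$) yields $W_\vartheta^{1,p(.,.)}(D\times\Omega)\hookrightarrow W^{1,p_s(.,.)}(D\times\Omega)$ continuously.

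For the second step, I would verify that the hypotheses of Theorem~\ref{theorem14} hold verbatim after replacing $p(.,.)$ by $p_s(.,.)$: the cone property of the boundary, continuity of $p_s$ on $\overline{D\times\Omega}$, the integrability $\vartheta\in L^{\alpha(.,.)}$ with $\alpha^->1$, and the admissibility of the target exponent $r(.,.)$ (which is imposed in the statement via $r(x,t)\leq p_s^*(x,t)$, supplemented by the strict inequality required by Theorem~\ref{theorem14}). Applying that theorem gives the compact embedding $W^{1,p_s(.,.)}(D\times\Omega)\hookrightarrow\hookrightarrow L_\vartheta^{r(.,.)}(D\times\Omega)$. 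Since the composition of a continuous embedding with a compact embedding is compact, the claim follows at once: a bounded sequence in $W_\vartheta^{1,p(.,.)}(D\times\Omega)$ becomes bounded in $W^{1,p_s(.,.)}(D\times\Omega)$ and therefore admits a subsequence converging strongly in $L_\vartheta^{r(.,.)}(D\times\Omega)$.

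The main obstacle lies in the first step: turning the clean pointwise Young estimate into a quantitative norm inequality inside a variable-exponent framework. The conjugate pair $\bigl((s(.,.)+1)/s(.,.),\,s(.,.)+1\bigr)$ is itself a pair of variable functions, so the manipulations must be performed pointwise and then globalized through the modular functional rather than through a single global exponent. Converting the resulting modular bound into a norm bound without losing uniform control of constants requires a careful case split (depending on whether $\|u\|_{p(.,.),\vartheta}$ is above or below $1$) via Proposition~\ref{proposition2}. Once this calibration is in place, Step~2 and the composition argument are routine.
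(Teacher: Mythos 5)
Your proof is correct and follows essentially the same route as the paper: first establish the continuous embedding $W_{\vartheta}^{1,p(.,.)}\left( D\times \Omega \right) \hookrightarrow W^{1,p_{s}(.,.)}\left( D\times \Omega \right)$ via a modular estimate resting on the identity $p_{s}/p=s/(s+1)$ and hypothesis $(H_{2})$, and then compose with the compact embedding of Theorem \ref{theorem14} applied with $p_{s}(.,.)$ in place of $p(.,.)$. The only (harmless) difference is that you obtain the intermediate modular bound from a pointwise Young inequality with the conjugate pair $\left( (s+1)/s,\,s+1\right)$, whereas the paper uses the variable-exponent H\"{o}lder inequality together with Proposition \ref{proposition2} and treats the zeroth-order term separately through $L_{\vartheta }^{p(.,.)}\hookrightarrow L^{p(.,.)}\hookrightarrow L^{p_{s}(.,.)}$ using $0<C\leq \vartheta$.
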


\begin{proof}
First of all, we will show that $W_{\vartheta }^{1,p(.,.)}\left( D\times
\Omega \right) $ is continuously embedded in $W^{1,p_{s}(.,.)}\left( D\times
\Omega \right) $. Let $u\in W_{\vartheta }^{1,p(.,.)}\left( D\times \Omega
\right) $. Then it is clear that $u,\left\vert \nabla u\right\vert \in
L_{\vartheta }^{p(.,.)}\left( D\times \Omega \right) $. If we consider the H%
\"{o}lder inequality, Proposition 2 and $\vartheta ^{-s(.,.)}\in L^{1}\left(
D\times \Omega \right) $, then we have%
\begin{eqnarray}
E\left( \dint\limits_{D}\left\vert \nabla u\right\vert
^{p_{s}(x,t)}dx\right) &=&E\left( \dint\limits_{D}\left\vert \nabla
u\right\vert ^{p_{s}(x,t)}\vartheta ^{\frac{p_{s}(x,t)}{p(x,t)}}\vartheta ^{-%
\frac{p_{s}(x,t)}{p(x,t)}}dx\right)  \notag \\
&\leq &C\left\Vert \left\vert \nabla u\right\vert ^{p_{s}(.,.)}\vartheta ^{%
\frac{p_{s}(.,.)}{p(.,.)}}\right\Vert _{\frac{p(.,.)}{p_{s}(.,.)}}\left\Vert
\vartheta ^{-\frac{s(.,.)}{s(.,.)+1}}\right\Vert _{s(.,.)+1}  \notag \\
&\leq &C\left( \rho _{p(.,.),\vartheta }\left( \left\vert \nabla
u\right\vert \right) \right) ^{\frac{1}{\gamma _{1}}}\left( \rho
_{s(.,.)}\left( \vartheta ^{-1}\right) \right) ^{\frac{1}{\gamma _{2}}} 
\notag \\
&\leq &CC_{1}\left( \rho _{p(.,.),\vartheta }\left( \left\vert \nabla
u\right\vert \right) \right) ^{\frac{1}{\gamma _{1}}}  \label{6}
\end{eqnarray}%
where%
\begin{equation*}
\gamma _{1}=\left\{ 
\begin{array}{cc}
\left( \frac{p}{p_{s}}\right) ^{-}, & \text{if }\left\Vert \left\vert \nabla
u\right\vert ^{p_{s}(.,.)}\vartheta ^{\frac{p_{s}(.,.)}{p(.,.)}}\right\Vert
_{\frac{p(.,.)}{p_{s}(.,.)}}\geq 1 \\ 
\left( \frac{p}{p_{s}}\right) ^{+}, & \text{if }\left\Vert \left\vert \nabla
u\right\vert ^{p_{s}(.,.)}\vartheta ^{\frac{p_{s}(.,.)}{p(.,.)}}\right\Vert
_{\frac{p(.,.)}{p_{s}(.,.)}}\leq 1.%
\end{array}%
\right.
\end{equation*}%
and%
\begin{equation*}
\gamma _{2}=\left\{ 
\begin{array}{cc}
s^{-}+1, & \text{if }\left\Vert \vartheta ^{-\frac{s(.,.)}{s(.,.)+1}%
}\right\Vert _{s\left( .,.\right) +1}\geq 1 \\ 
s^{+}+1, & \text{if }\left\Vert \vartheta ^{-\frac{s(.,.)}{s(.,.)+1}%
}\right\Vert _{s\left( .,.\right) +1}\leq 1.%
\end{array}%
\right.
\end{equation*}%
Hence, we get 
\begin{eqnarray}
\left\Vert \nabla u\right\Vert _{p_{s}(.,.)}^{\gamma _{3}} &\leq &E\left(
\dint\limits_{D}\left\vert \nabla u\right\vert ^{p_{s}(x,t)}dx\right) \leq
CC_{1}\left( \rho _{p(.,.),\vartheta }\left( \left\vert \nabla u\right\vert
\right) \right) ^{\frac{1}{\gamma _{1}}}  \notag \\
&\leq &CC_{1}\left\Vert \nabla u\right\Vert _{p(.,.),\vartheta }^{\frac{%
\gamma _{4}}{\gamma _{1}}}  \label{7}
\end{eqnarray}%
where%
\begin{equation*}
\gamma _{3}=\left\{ 
\begin{array}{cc}
p_{s}^{-}, & \text{if }\left\Vert \nabla u\right\Vert _{p_{s}(.,.)}\geq 1 \\ 
p_{s}^{+}, & \text{if }\left\Vert \nabla u\right\Vert _{p_{s}(.,.)}\leq 1%
\end{array}%
\right.
\end{equation*}%
and 
\begin{equation*}
\gamma _{4}=\left\{ 
\begin{array}{cc}
p^{+}, & \text{if }\left\Vert \nabla u\right\Vert _{p(.,.),\vartheta }\geq 1
\\ 
p^{-}, & \text{if }\left\Vert \nabla u\right\Vert _{p(.,.),\vartheta }\leq 1%
\end{array}%
\right. .
\end{equation*}%
Therefore, we obtain 
\begin{equation}
\left\Vert \nabla u\right\Vert _{p_{s}(.,.)}\leq C^{\ast }\left\Vert \nabla
u\right\Vert _{p(.,.),\vartheta }^{\frac{\gamma _{4}}{\gamma _{1}\gamma _{3}}%
}.  \label{8}
\end{equation}%
Since $p_{s}\left( .,.\right) <p(.,.)$, we have $L_{\vartheta
}^{p(.,.)}\left( D\times \Omega \right) \hookrightarrow L^{p(.,.)}\left(
D\times \Omega \right) \hookrightarrow L^{p_{s}(.,.)}\left( D\times \Omega
\right) $, see \cite[Theorem 2.8]{Kor}. Then there exists $C^{\ast \ast }>0$
such that%
\begin{equation}
\left\Vert u\right\Vert _{p_{s}(.,.)}\leq C^{\ast \ast }\left\Vert
u\right\Vert _{p(.,.),\vartheta }  \label{9}
\end{equation}%
for almost everywhere in $D\times \Omega $. By (\ref{8}) and (\ref{9}), we
conclude that $W_{\vartheta }^{1,p(.,.)}\left( D\times \Omega \right)
\subset W^{1,p_{s}(.,.)}\left( D\times \Omega \right) $. If we consider the
Banach theorem in \cite{Car}, we get $W_{\vartheta }^{1,p(.,.)}\left(
D\times \Omega \right) \hookrightarrow W^{1,p_{s}(.,.)}\left( D\times \Omega
\right) $. This follows from Theorem \ref{theorem14} that 
\begin{equation*}
W^{1,p_{s}(.,.)}\left( D\times \Omega \right) \hookrightarrow
\hookrightarrow L_{\vartheta }^{r(.,.)}\left( D\times \Omega \right) .
\end{equation*}%
This completes the proof.
\end{proof}

Now, we reveal some required conditions for the equation (\ref{1}). Assume
that $A:%
\mathbb{R}
^{d}\times \Omega \times 
\mathbb{R}
\times 
\mathbb{R}
^{d}\longrightarrow 
\mathbb{R}
^{d}$ and $f:%
\mathbb{R}
^{d}\times \Omega \longrightarrow 
\mathbb{R}
$ satisfy the following growth conditions:

\begin{enumerate}
\item[$\left( H_{3}\right) $] $\left\vert A\left( x,t,s,\xi \right)
\right\vert \leq \beta \vartheta ^{\frac{1}{p(x,t)}}\left[ k(x,t)+\vartheta
^{\frac{1}{q(x,t)}}\left\vert \xi \right\vert ^{p(x,t)-1}\right] $

\item[$\left( H_{4}\right) $] $\left( A\left( x,t,s,\xi \right) -A\left(
x,t,s,\mu \right) \right) \left( \xi -\mu \right) >0$, $\xi \neq \mu $

\item[$\left( H_{5}\right) $] $A\left( x,t,s,\xi \right) \left\vert \xi
\right\vert \geq \alpha \vartheta (x,t)\left\vert \xi \right\vert ^{p(x,t)}$
\end{enumerate}

where $k(x,t)$ is a positive function in $L^{q(.,.)}(D\times \Omega )$ where 
$\frac{1}{p(.,.)}+\frac{1}{q(.,.)}=1$ and $\alpha $, $\beta $ are positive
constants.

Let $A_{0}\left( x,t,s,\xi \right) :%
\mathbb{R}
^{d}\times \Omega \times 
\mathbb{R}
\times 
\mathbb{R}
^{d}\longrightarrow 
\mathbb{R}
$ be a Carath\'{e}odory function such that for a.e. $\left( x,t\right) \in 
\mathbb{R}
^{d}\times \Omega $ and for all $s\in 
\mathbb{R}
$, $\xi \in 
\mathbb{R}
^{d}$, the growth condition%
\begin{equation}
\left\vert A_{0}\left( x,t,s,\xi \right) \right\vert \leq \gamma \left(
x,t\right) +g(s)\vartheta (x,t)\left\vert \xi \right\vert ^{p(x,t)-1}
\label{10}
\end{equation}%
is satisfied where $g:%
\mathbb{R}
\longrightarrow 
\mathbb{R}
^{+}$ is a continuous function that belongs to $L^{1}\left( 
\mathbb{R}
\right) $ and $\gamma \left( x,t\right) $ belongs to $L_{\vartheta ^{\ast
}}^{q\left( .,.\right) }\left( D\times \Omega \right) $. Finally, we assume
that $f\in W_{\vartheta ^{\ast }}^{-1,q(.,.)}\left( D\times \Omega \right) .$

\begin{lemma}
\label{lemma16}(see \cite{Lah})Assume that $\left( H_{3}\right) -\left(
H_{5}\right) $ hold and let $\left\{ u_{n}\right\} _{n\in 
\mathbb{N}
}$ be a sequence in $W_{0,\vartheta }^{1,p(.,.)}\left( D\times \Omega
\right) $ such that $u_{n}\rightharpoonup u$ in $W_{0,\vartheta
}^{1,p(.,.)}\left( D\times \Omega \right) $ and 
\begin{equation}
E\left( \dint\limits_{D}\left[ A\left( x,t,u_{n},\nabla u_{n}\right)
-A\left( x,t,u_{n},\nabla u\right) \right] \nabla \left( u_{n}-u\right)
dx\right) \longrightarrow 0\text{.}  \label{11}
\end{equation}%
Then $u_{n}\longrightarrow u$ in $W_{0,\vartheta }^{1,p(.,.)}\left( D\times
\Omega \right) $.
\end{lemma}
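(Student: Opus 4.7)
The plan is to follow the standard monotonicity plus Vitali scheme for $p(.,.)$-Laplacian type operators. First I would introduce
$$D_n(x,t) = \bigl[A(x,t,u_n,\nabla u_n) - A(x,t,u_n,\nabla u)\bigr]\bigl(\nabla u_n - \nabla u\bigr),$$
which is non-negative almost everywhere by the strict monotonicity assumption $(H_4)$. Combined with the hypothesis $E\!\int_D D_n\,dx \to 0$, this gives $D_n \to 0$ in $L^1(D\times \Omega)$, so passing to a subsequence $D_n(x,t) \to 0$ pointwise a.e.

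Next I would extract pointwise information on the gradients. The weak convergence $u_n \rightharpoonup u$ in $W_{0,\vartheta}^{1,p(.,.)}$ together with Theorem \ref{theorem12} yields $u_n \to u$ strongly in $L_\vartheta^{r(.,.)}$, and on a further subsequence $u_n(x,t) \to u(x,t)$ a.e. Using that $A$ is Carathéodory, $A(x,t,u_n,\nabla u) \to A(x,t,u,\nabla u)$ a.e., and the strict monotonicity in $(H_4)$ lets me argue in the usual way (as in \cite{Lah}): if at some point $\nabla u_n \not\to \nabla u$, one could extract a limit $\xi \neq \nabla u$ for which $\bigl[A(x,t,u,\xi)-A(x,t,u,\nabla u)\bigr](\xi-\nabla u)>0$, contradicting $D_n(x,t)\to 0$. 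The growth bound $(H_3)$ together with the coercivity $(H_5)$ provides the local boundedness needed to run this extraction argument. Hence $\nabla u_n \to \nabla u$ a.e.

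Finally I would upgrade the a.e. convergence of the gradients to strong convergence in $L_\vartheta^{p(.,.)}(D\times \Omega)$. The uniform bound on $\rho_{p(.,.),\vartheta}(|\nabla u_n|)$ coming from weak convergence, combined with Theorem \ref{theorem7}, already gives $|\nabla u_n|^{p(x,t)-1}\vartheta^{1/q(x,t)} \rightharpoonup |\nabla u|^{p(x,t)-1}\vartheta^{1/q(x,t)}$ in $L^{q(.,.)}$. Expanding $D_n$ into its four natural pieces and passing to the limit using the growth condition $(H_3)$, the a.e. convergence of $\nabla u_n$, and Fatou's lemma yields
$$\rho_{p(.,.),\vartheta}(|\nabla u_n|) \longrightarrow \rho_{p(.,.),\vartheta}(|\nabla u|).$$
A Brezis--Lieb / uniform convexity argument in the reflexive space $L_\vartheta^{p(.,.)}$ (Theorem \ref{reflexive}) then promotes this modular convergence, combined with weak convergence of $\nabla u_n$, to strong convergence $\nabla u_n \to \nabla u$ in $L_\vartheta^{p(.,.)}$. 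The Poincaré inequality (\ref{5}) closes the argument on $W_{0,\vartheta}^{1,p(.,.)}$. A standard subsequence principle upgrades the conclusion from the extracted subsequence to the full sequence.

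The main obstacle is the middle step, namely the delicate passage from $D_n \to 0$ a.e. and the Carathéodory continuity of $A$ to pointwise convergence of the gradients $\nabla u_n \to \nabla u$. This is where the strict (rather than merely non-strict) monotonicity of $(H_4)$ is essential and where the nonlinear coupling of $A$ in its $s$ and $\xi$ arguments, moderated by the weighted growth in $(H_3)$, has to be handled most carefully; the remaining steps are essentially functional-analytic book-keeping using the tools already established in the paper.
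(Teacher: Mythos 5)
The paper does not actually prove this lemma itself---it is imported verbatim from Lahmi, Azroul and El Haitin \cite{Lah}---and your sketch reproduces essentially the standard monotonicity argument used there: nonnegativity of the integrand $D_n$ via $(H_4)$, convergence $D_n\to 0$ in $L^1$ and hence a.e., pointwise identification of the limit of $\nabla u_n$ through strict monotonicity, and a Vitali/Brezis--Lieb type upgrade from a.e. and modular convergence to norm convergence, closed by the Poincar\'{e} inequality (\ref{5}). As a plan this is the correct route and it honestly isolates the one genuinely delicate step, namely the pointwise boundedness of $\nabla u_n(x,t)$ and the identification of its cluster points, which requires the quantitative lower bound on $D_n$ extracted from $(H_3)$ and $(H_5)$; nothing in your outline conflicts with the tools established in the paper.
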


\section{Existence of Weak Solution of Stochastic Partial Differential
Equations With Stochastic Field Growth}

\begin{definition}
A function $u\in W_{0,\vartheta }^{1,p(.,.)}\left( D\times \Omega \right) $
is said to be a weak solution (\ref{1}), if 
\begin{equation}
E\left( \dint\limits_{D}\left[ A\left( x,t,u,\nabla u\right) \nabla \varphi
+A_{0}\left( x,t,u,\nabla u\right) \varphi \right] dx\right) =E\left(
\dint\limits_{D}f\left( x,t\right) \varphi dx\right)  \label{12}
\end{equation}%
for all $\varphi \in W_{0,\vartheta }^{1,p(.,.)}\left( D\times \Omega
\right) $.
\end{definition}

\begin{definition}
A bounded operator $T$ from $W_{0,\vartheta }^{1,p(.,.)}\left( D\times
\Omega \right) $ to its dual $W_{\vartheta ^{\ast }}^{-1,q(.,.)}\left(
D\times \Omega \right) $ is called pseudo-monotone if and only if for any
sequences $\left( u_{k}\right) _{k\in 
\mathbb{N}
}$ \text{in }$W_{0,\vartheta }^{1,p(.,.)}\left( D\times \Omega \right) $
satisfying

\begin{enumerate}
\item[\textit{(i)}] $u_{k}\rightharpoonup u$ in $W_{0,\vartheta
}^{1,p(.,.)}\left( D\times \Omega \right) $ as $k\longrightarrow \infty ,$

\item[\textit{(ii)}] $\limsup\limits_{k\longrightarrow \infty }\langle
T\left( u_{k}\right) ,u_{k}-u\rangle \leq 0$

imply $T\left( u_{k}\right) \rightharpoonup T\left( u\right) $ and $\langle
T\left( u_{k}\right) ,u_{k}\rangle \longrightarrow \langle T\left( u\right)
,u\rangle .$
\end{enumerate}
\end{definition}

\begin{definition}
Assume that $X$ is a reflexive Banach space and $X^{\ast }$ denotes dual of $%
X.$ Also, let $\langle .,.\rangle $ be a pair between $X$ and $X^{\ast }$.
Then a mapping $\Gamma :X\longrightarrow X^{\ast }$ is called coercive if
there exists a $u\in X$ such that 
\begin{equation*}
\frac{\langle \Gamma (u),u\rangle }{\left\Vert u\right\Vert _{X}}%
\longrightarrow \infty \text{ as }\left\Vert u\right\Vert
_{X}\longrightarrow \infty \text{.}
\end{equation*}
\end{definition}

Let us define the operator $\Gamma :W_{0,\vartheta }^{1,p(.,.)}\left(
D\times \Omega \right) \longrightarrow W_{\vartheta ^{\ast
}}^{-1,q(.,.)}\left( D\times \Omega \right) $ by 
\begin{equation*}
\langle \Gamma (u),\varphi \rangle =E\left( \dint\limits_{D}\left[ A\left(
x,t,u,\nabla u\right) \nabla \varphi +A_{0}\left( x,t,u,\nabla u\right)
\varphi \right] dx\right)
\end{equation*}%
where $\varphi \in W_{0,\vartheta }^{1,p(.,.)}\left( D\times \Omega \right) $
and $\frac{1}{p(.,.)}+\frac{1}{q(.,.)}=1$. Hence, we can write the equation (%
\ref{1}) as $\langle \Gamma (u),\varphi \rangle =\langle f,\varphi \rangle .$

\begin{proposition}
\label{proposition20}(Weak compactness of bounded set)Let $X$ be a reflexive
Banach space. Moreover, assume that $\left( u_{k}\right) _{k\in 
\mathbb{N}
}$ is a sequence such that

(i) $u_{k}\in X$

(ii) $\left\Vert u_{k}\right\Vert _{X}\leq C$ for all $k\in 
\mathbb{N}
,$

that is, $\left( u_{k}\right) _{k\in 
\mathbb{N}
}$ is a bounded sequence in $X,$ then there exists a subsequence $\left(
u_{k_{l}}\right) _{l\in 
\mathbb{N}
}$ and an element $u_{0}\in X$ such that $u_{k_{l}}\rightharpoonup u_{0}$ in 
$X.$
\end{proposition}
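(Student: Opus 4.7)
The plan is to deduce the sequential statement from the Banach--Alaoglu theorem applied to a suitable separable reduction, and then to push the resulting weak-$\ast$ convergence back into $X$ via reflexivity. The point of the separable reduction is that Banach--Alaoglu only directly gives weak-$\ast$ compactness, which is a statement about nets; to extract a subsequence one needs metrizability of the weak-$\ast$ topology on bounded sets, and this in turn needs separability of the predual.

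First I would form $Y = \overline{\mathrm{span}}\{u_k : k\in \mathbb{N}\}$, which is a closed, separable subspace of $X$. As a closed subspace of a reflexive space, $Y$ is reflexive, so the canonical embedding $J : Y \longrightarrow Y^{\ast\ast}$ is an isometric isomorphism. A standard consequence of reflexivity together with separability of $Y$ is that $Y^{\ast}$ is also separable; I would record this and then view $(Ju_k)_{k\in\mathbb{N}}$ as a norm-bounded sequence in $Y^{\ast\ast}$.

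Next, by Banach--Alaoglu the closed ball of radius $C$ in $Y^{\ast\ast}$ is weak-$\ast$ compact, and since $Y^{\ast}$ is separable this ball is also weak-$\ast$ metrizable, hence sequentially compact. Therefore a subsequence $(Ju_{k_l})_{l\in\mathbb{N}}$ converges weak-$\ast$ to some $\Phi\in Y^{\ast\ast}$. By reflexivity, $\Phi = Ju_0$ for a unique $u_0\in Y\subset X$. Unwinding the definition of the weak-$\ast$ topology on $Y^{\ast\ast}$ via $J$, this says exactly that $\langle g, u_{k_l}\rangle \longrightarrow \langle g, u_0\rangle$ for every $g\in Y^{\ast}$, i.e. $u_{k_l}\rightharpoonup u_0$ weakly in $Y$.

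Finally, I would upgrade weak convergence in $Y$ to weak convergence in $X$: for any $f\in X^{\ast}$, the restriction $f|_Y$ lies in $Y^{\ast}$, so $\langle f, u_{k_l}\rangle = \langle f|_Y, u_{k_l}\rangle \longrightarrow \langle f|_Y, u_0\rangle = \langle f, u_0\rangle$, yielding $u_{k_l}\rightharpoonup u_0$ in $X$. The main obstacle is the metrizability step; without the passage to the separable subspace $Y$ one only gets weak-$\ast$ cluster points through nets, and the sequential conclusion would fail in general. Everything else is formal manipulation of the canonical embedding and of reflexivity.
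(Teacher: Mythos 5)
Your argument is correct and complete. Note, however, that the paper offers no proof of this proposition at all: it is stated as a known fact (it is the standard sequential weak compactness theorem for reflexive spaces, a consequence of Kakutani's theorem / Eberlein--\v{S}mulian), so there is no in-paper argument to compare against. Your route is the classical textbook one: pass to the closed separable subspace $Y$ generated by the sequence, use that closed subspaces of reflexive spaces are reflexive, deduce separability of $Y^{\ast}$ from separability of $Y^{\ast\ast}=J(Y)$, apply Banach--Alaoglu together with weak-$\ast$ metrizability of bounded sets in the dual of a separable space to extract a weak-$\ast$ convergent subsequence in $Y^{\ast\ast}$, identify the limit with an element $u_{0}\in Y$ by reflexivity, and finally lift weak convergence from $Y$ to $X$ by restricting functionals. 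Each of these steps is justified as you state it, and you correctly flag the one genuinely delicate point, namely that the separable reduction is what converts compactness (a statement about nets) into sequential compactness.
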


\begin{theorem}
\label{kritikteo}(see \cite{Ren})Let X be a reflexive Banach space and
assume $\Gamma :X\longrightarrow X^{\ast }$ is continuous (bounded),
coercive and pseudo-monotone. Then for every $g\in X^{\ast }$ there exists a
solution $u\in X$ of the equation $\Gamma \left( u\right) =g.$
\end{theorem}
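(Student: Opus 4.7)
The plan is to prove this classical theorem (the Brezis surjectivity theorem for pseudo-monotone operators) via Galerkin approximation. Assuming $X$ is separable (the case relevant to the paper), I would fix an increasing sequence of finite-dimensional subspaces $X_1 \subset X_2 \subset \cdots$ with $\overline{\bigcup_n X_n} = X$, and for each $n$ seek a Galerkin solution $u_n \in X_n$ satisfying $\langle \Gamma(u_n), v \rangle = \langle g, v \rangle$ for every $v \in X_n$. Identifying $X_n$ with $\mathbb{R}^{d_n}$ via a basis, this is a finite-dimensional nonlinear equation $F_n(u_n) = 0$, where $F_n$ is the restriction and transfer of $\Gamma - g$ to $X_n$.

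The existence of each $u_n$ follows from coercivity: there is $R > 0$, independent of $n$, such that $\langle \Gamma(u) - g, u \rangle > 0$ whenever $\|u\|_X = R$. Continuity of $\Gamma$ makes $F_n$ continuous, and a standard consequence of the Brouwer fixed-point theorem — a continuous vector field on a closed ball in $\mathbb{R}^{d_n}$ pointing outward on the boundary must vanish inside — produces $u_n \in \overline{B_R} \cap X_n$ with $F_n(u_n) = 0$. Since $(u_n)$ is uniformly bounded in $X$, Proposition \ref{proposition20} provides a subsequence (still denoted $(u_n)$) and a limit $u \in X$ with $u_n \rightharpoonup u$.

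To close the argument, I need to verify the pseudo-monotonicity hypothesis $\limsup_n \langle \Gamma(u_n), u_n - u \rangle \leq 0$. Since $u$ need not lie in any $X_n$, I would choose $w_n \in X_n$ with $w_n \to u$ strongly in $X$, which is possible by density of $\bigcup_n X_n$. Testing the Galerkin identity against $u_n - w_n \in X_n$ gives $\langle \Gamma(u_n), u_n - w_n \rangle = \langle g, u_n - w_n \rangle \to 0$; combined with boundedness of $\Gamma$ on bounded sets and $w_n - u \to 0$ strongly, one obtains $\langle \Gamma(u_n), u_n - u \rangle \to 0$. Pseudo-monotonicity then yields $\Gamma(u_n) \rightharpoonup \Gamma(u)$ in $X^\ast$; passing to the limit in the Galerkin identity for fixed $v \in \bigcup_k X_k$ and using density gives $\langle \Gamma(u), v \rangle = \langle g, v \rangle$ for every $v \in X$, i.e.\ $\Gamma(u) = g$.

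The main obstacle I expect is precisely this $\limsup$ step: producing strong approximants $w_n \in X_n$ of $u$ and controlling the cross term $\langle \Gamma(u_n), w_n - u \rangle$ using only that $\Gamma$ is bounded (not demicontinuous a priori on the whole sequence). A secondary technicality is the non-separable case, which requires replacing the sequence $(X_n)$ by the directed net of all finite-dimensional subspaces and extracting a weakly convergent subnet; once this is handled, the pseudo-monotonicity argument is the same.
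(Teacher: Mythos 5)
The paper does not actually prove this theorem: it is imported verbatim from the cited reference (Renardy--Rogers), so there is no internal proof to compare against. Your Galerkin argument is, in substance, the standard proof of this surjectivity theorem and is essentially the one given in that reference, and it is correct. In particular, the step you flag as the main obstacle is not really an obstacle: writing $\langle \Gamma(u_n), u_n-u\rangle = \langle \Gamma(u_n), u_n-w_n\rangle + \langle \Gamma(u_n), w_n-u\rangle$, the first term equals $\langle g, u_n-w_n\rangle \to 0$ by the Galerkin identity and weak convergence, and the second is bounded by $\sup_n\Vert\Gamma(u_n)\Vert_{X^\ast}\,\Vert w_n-u\Vert_X \to 0$ using only that $\Gamma$ maps bounded sets to bounded sets; so in fact $\langle \Gamma(u_n), u_n-u\rangle \to 0$, which is stronger than the needed $\limsup \le 0$. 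Two minor points to tidy up rather than gaps: (i) continuity of the finite-dimensional maps $F_n$ requires demicontinuity of $\Gamma$, which here is supplied directly by the stated continuity hypothesis (and is in any case automatic for bounded pseudo-monotone operators on reflexive spaces); (ii) the separability assumption is harmless for the application in this paper, since $W_{0,\vartheta }^{1,p(.,.)}\left( D\times \Omega \right)$ is separable, though your remark about passing to nets in the general case is the right fix if one wants full generality.
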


Now, we are ready to give our main motivation of the paper.

\begin{theorem}
If the conditions $\left( H_{3}\right) ,$ $\left( H_{4}\right) $ and $\left(
H_{5}\right) $ hold, then there exists at least a weak solution of (\ref{1}).
\end{theorem}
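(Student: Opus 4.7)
The plan is to reformulate \eqref{1} as the abstract operator equation $\Gamma(u)=f$ in the dual space $W_{\vartheta ^{\ast }}^{-1,q(.,.)}\left( D\times \Omega \right) $ and then apply Theorem \ref{kritikteo}. Accordingly, I would verify in turn that $\Gamma $ is bounded and continuous, coercive, and pseudo-monotone on $W_{0,\vartheta }^{1,p(.,.)}\left( D\times \Omega \right) $.

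Boundedness and continuity are the routine part. For $u,\varphi \in W_{0,\vartheta }^{1,p(.,.)}\left( D\times \Omega \right) $, the growth condition $\left( H_{3}\right) $ writes $\left\vert A(x,t,u,\nabla u)\right\vert $ as a sum whose factors pair by the weighted Hölder inequality against $\nabla \varphi $: since $k(x,t)\in L^{q(.,.)}$ and $\left\vert \nabla u\right\vert ^{p(x,t)-1}\vartheta ^{1/q(x,t)}\in L_{\vartheta ^{\ast }}^{q(.,.)}$, one obtains an estimate of the form $\left\vert \langle \Gamma (u),\varphi \rangle \right\vert \leq C\left( 1+\left\Vert u\right\Vert ^{p^{+}-1}\right) \left\Vert \varphi \right\Vert $. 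The $A_{0}$ contribution is absorbed using \eqref{10} together with $\gamma \in L_{\vartheta ^{\ast }}^{q(.,.)}$ and the local boundedness of the continuous function $g$. Continuity of $\Gamma $ then follows by a standard Nemytskii-type argument combined with Theorem \ref{theorem7}.

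For coercivity I would use $\left( H_{5}\right) $ directly: the principal part satisfies $E\left( \int_{D}A(x,t,u,\nabla u)\nabla u\,dx\right) \geq \alpha \rho _{p(.,.),\vartheta }\left( \left\vert \nabla u\right\vert \right) $. The $A_{0}$ contribution is controlled via \eqref{10}; because $g\in L^{1}(\mathbb{R})$ and $g\geq 0$, the term $g(u)\vartheta \left\vert \nabla u\right\vert ^{p(x,t)-1}\left\vert u\right\vert $ can be absorbed into the principal term by a Young inequality with conjugate exponents $p(x,t),q(x,t)$ after factoring $\vartheta =\vartheta ^{1/p(x,t)}\vartheta ^{1/q(x,t)}$. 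Combined with the Poincaré inequality \eqref{5} and Proposition \ref{proposition2}, this yields $\langle \Gamma (u),u\rangle /\left\Vert u\right\Vert \longrightarrow \infty $ as $\left\Vert u\right\Vert \longrightarrow \infty $.

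The main obstacle is pseudo-monotonicity. Let $\left( u_{k}\right) $ satisfy $u_{k}\rightharpoonup u$ in $W_{0,\vartheta }^{1,p(.,.)}$ with $\limsup_{k}\langle \Gamma (u_{k}),u_{k}-u\rangle \leq 0$. By Proposition \ref{proposition20} and the compact embedding of Theorem \ref{theorem12}, after passing to a subsequence $u_{k}\longrightarrow u$ strongly in $L_{\vartheta }^{r(.,.)}\left( D\times \Omega \right) $ and a.e.\ on $D\times \Omega $. I would then show that
\begin{equation*}
E\!\left( \dint\limits_{D}A_{0}(x,t,u_{k},\nabla u_{k})(u_{k}-u)\,dx\right) \longrightarrow 0
\end{equation*}
by \eqref{10} combined with the integrability of $g$ and the strong/a.e.\ convergence of $u_{k}$, and similarly
\begin{equation*}
E\!\left( \dint\limits_{D}A(x,t,u_{k},\nabla u)\nabla (u_{k}-u)\,dx\right) \longrightarrow 0
\end{equation*}
using $\left( H_{3}\right) $, the weak convergence of $\nabla u_{k}$, and the a.e.\ convergence of $u_{k}$ (so that $A(x,t,u_{k},\nabla u)\longrightarrow A(x,t,u,\nabla u)$ in the appropriate weighted $L^{q(.,.)}$ space by a Vitali-type argument). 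These reduce the $\limsup $ hypothesis to exactly \eqref{11}, and Lemma \ref{lemma16} then yields $u_{k}\longrightarrow u$ strongly in $W_{0,\vartheta }^{1,p(.,.)}$. Continuity of $\Gamma $ now delivers $\Gamma (u_{k})\rightharpoonup \Gamma (u)$ and $\langle \Gamma (u_{k}),u_{k}\rangle \longrightarrow \langle \Gamma (u),u\rangle $, completing pseudo-monotonicity. Since $f\in W_{\vartheta ^{\ast }}^{-1,q(.,.)}\left( D\times \Omega \right) $ by assumption, Theorem \ref{kritikteo} supplies $u\in W_{0,\vartheta }^{1,p(.,.)}$ with $\Gamma (u)=f$, which is precisely \eqref{12}.
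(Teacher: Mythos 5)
Your overall strategy coincides with the paper's: the same splitting $\Gamma=\Gamma_{1}+\Gamma_{2}$, the same three verifications (boundedness, coercivity, pseudo-monotonicity) followed by Theorem \ref{kritikteo}, and the same reduction of pseudo-monotonicity to \eqref{11} plus Lemma \ref{lemma16}. Your organization of the pseudo-monotonicity step is in fact tighter than the paper's: instead of first extracting weak limits $\varphi ,\psi $ of the Nemytskii images and proving convergence of the energies, you split $\langle \Gamma (u_{k}),u_{k}-u\rangle $ directly, kill the two cross terms using the compact embedding of Theorem \ref{theorem12} and the strong convergence $A(x,t,u_{k},\nabla u)\longrightarrow A(x,t,u,\nabla u)$, and then use the sign in $\left( H_{4}\right) $ to land exactly on \eqref{11}; this is correct and avoids the subsequence bookkeeping in the paper's Step 3.

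The one step that does not close as written is the coercivity of the $A_{0}$-part. Young's inequality with exponents $p(x,t),q(x,t)$ applied to $g(u)\vartheta \left\vert \nabla u\right\vert ^{p(x,t)-1}\left\vert u\right\vert $ leaves, besides the absorbable piece $\varepsilon \,g(u)^{q(x,t)}\vartheta \left\vert \nabla u\right\vert ^{p(x,t)}$, a remainder $C_{\varepsilon }\vartheta \left\vert u\right\vert ^{p(x,t)}$, i.e. $C_{\varepsilon }\rho _{p(.,.),\vartheta }(u)$. Via \eqref{5} and Proposition \ref{proposition2} this remainder is only controlled by $C\left\Vert \nabla u\right\Vert _{p(.,.),\vartheta }^{p^{+}}$, while the retained principal term is only bounded below by $c\left\Vert \nabla u\right\Vert _{p(.,.),\vartheta }^{p^{-}}$; when $p^{-}<p^{+}$ the remainder is not dominated and coercivity does not follow. (To be fair, the paper's own treatment --- asserting that $\left\Vert A_{0}(x,t,u,\nabla u)\right\Vert _{q(.,.),\vartheta ^{\ast }}$ is bounded uniformly in $u$ --- is no better, since by \eqref{10} that norm grows like $\left\Vert \nabla u\right\Vert ^{p-1}$.) A correct treatment needs an extra structural input, e.g. a sign condition $A_{0}(x,t,s,\xi )s\geq 0$ or smallness of $g$ relative to $\alpha $. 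Note also that a continuous $g\in L^{1}(\mathbb{R})$ need not be bounded, so ``local boundedness of $g$'' does not suffice in your boundedness estimate either; both you and the paper implicitly use $\left\Vert g\right\Vert _{\infty }<\infty $.
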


\begin{proof}
The proof base on three parts.

\textbf{Step 1. }First of all, we will show that the operator $\Gamma $ is
bounded. The operator $\Gamma $ is equal to the sum of two operators such
that $\Gamma =\Gamma _{1}+\Gamma _{2}$ where 
\begin{equation*}
\langle \Gamma _{1}(u),\varphi \rangle =E\left( \dint\limits_{D}A\left(
x,t,u,\nabla u\right) \nabla \varphi dx\right)
\end{equation*}%
and%
\begin{equation*}
\langle \Gamma _{2}(u),\varphi \rangle =E\left( \dint\limits_{D}A_{0}\left(
x,t,u,\nabla u\right) \varphi dx\right) .
\end{equation*}%
If we consider $\left( H_{3}\right) $, Proposition \ref{proposition2} and H%
\"{o}lder inequality, then we have%
\begin{eqnarray*}
&&\left\vert \langle \Gamma _{1}(u),\varphi \rangle \right\vert \\
&=&\left\vert E\left( \dint\limits_{D}A\left( x,t,u,\nabla u\right)
\vartheta ^{-\frac{1}{p(x,t)}}\nabla \varphi \vartheta ^{\frac{1}{p(x,t)}%
}dx\right) \right\vert \\
&\leq &C\left\Vert A\left( x,t,u,\nabla u\right) \vartheta ^{-\frac{1}{p(x,t)%
}}\right\Vert _{q(.,.)}\left\Vert \nabla \varphi \vartheta ^{\frac{1}{p(x,t)}%
}\right\Vert _{p(.,.)} \\
&=&C\left\Vert A\left( x,t,u,\nabla u\right) \vartheta ^{-\frac{1}{p(x,t)}%
}\right\Vert _{q(.,.)}\left\Vert \left\vert \varphi \right\vert \right\Vert
_{W_{0,\vartheta }^{1,p(.,.)}\left( D\times \Omega \right) } \\
&\leq &C\left[ E\left( \dint\limits_{D}\left\vert A\left( x,t,u,\nabla
u\right) \right\vert ^{q(x,t)}\vartheta ^{-\frac{q(x,t)}{p(x,t)}}dx\right) %
\right] ^{\theta }\left\Vert \left\vert \varphi \right\vert \right\Vert
_{W_{0,\vartheta }^{1,p(.,.)}\left( D\times \Omega \right) } \\
&\leq &C\left[ E\left( \dint\limits_{D}\left\vert \beta \vartheta ^{\frac{1}{%
p(x,t)}}\left[ k(x,t)\right. \right. \right. \right. \\
&&\left. \left. \left. +\vartheta ^{\frac{1}{q(x,t)}}\left\vert \nabla
u\right\vert ^{p(x,t)-1}\right] \right\vert ^{q(x,t)}\left. \vartheta ^{-%
\frac{q(x,t)}{p(x,t)}}dx\right) \right] ^{\theta }\left\Vert \left\vert
\varphi \right\vert \right\Vert _{W_{0,\vartheta }^{1,p(.,.)}\left( D\times
\Omega \right) } \\
&\leq &C^{\ast }\left( \max \left\{ \beta ^{q^{-}},\beta ^{q^{+}}\right\}
\right) ^{\theta }\left[ E\left( \dint\limits_{D}\left[ \left\vert
k(x,t)\right\vert ^{q(x,t)}\right. \right. \right. \\
&&\left. \left. \left. +\left\vert \nabla u\right\vert ^{p(x,t)}\vartheta 
\right] dx\right) \right] ^{\theta }\left\Vert \left\vert \varphi
\right\vert \right\Vert _{W_{0,\vartheta }^{1,p(.,.)}\left( D\times \Omega
\right) } \\
&\leq &C^{\ast }\left( C_{1}+\rho _{p(.,.),\vartheta }(\nabla u)\right)
^{\theta }\left\Vert \left\vert \varphi \right\vert \right\Vert
_{W_{0,\vartheta }^{1,p(.,.)}\left( D\times \Omega \right) }
\end{eqnarray*}%
where%
\begin{equation*}
\theta =\left\{ 
\begin{array}{c}
\frac{1}{^{q^{-}}},\text{ if }\left\Vert A\left( x,t,u,\nabla u\right)
\right\Vert _{q(.,.),\vartheta ^{\ast }}\geq 1, \\ 
\frac{1}{^{q^{+}}},\text{ if }\left\Vert A\left( x,t,u,\nabla u\right)
\right\Vert _{q(.,.),\vartheta ^{\ast }}\leq 1%
\end{array}%
\right. ,
\end{equation*}%
and $\frac{1}{p(.,.)}+\frac{1}{q(.,.)}=1$. This yields that $\Gamma _{1}$ is
bounded. In similar way, since $\gamma \in L_{\vartheta ^{\ast }}^{q\left(
.,.\right) }\left( D\times \Omega \right) ,$ we get%
\begin{eqnarray*}
&&\left\vert \langle \Gamma _{2}(u),\varphi \rangle \right\vert \\
&=&\left\vert E\left( \dint\limits_{D}A_{0}\left( x,t,u,\nabla u\right)
\vartheta ^{-\frac{1}{p(x,t)}}\varphi \vartheta ^{\frac{1}{p(x,t)}}dx\right)
\right\vert \\
&\leq &C\left\Vert A_{0}\left( x,t,u,\nabla u\right) \vartheta ^{-\frac{1}{%
p(x,t)}}\right\Vert _{q(.,.)}\left\Vert \varphi \vartheta ^{\frac{1}{p(x,t)}%
}\right\Vert _{p(.,.)} \\
&\leq &C\left\Vert A_{0}\left( x,t,u,\nabla u\right) \vartheta ^{-\frac{1}{%
p(x,t)}}\right\Vert _{q(.,.)}\left\Vert \left\vert \varphi \right\vert
\right\Vert _{W_{0,\vartheta }^{1,p(.,.)}\left( D\times \Omega \right) } \\
&\leq &C\left[ E\left( \dint\limits_{D}\left\vert A_{0}\left( x,t,u,\nabla
u\right) \right\vert ^{q(x,t)}\vartheta ^{-\frac{q(x,t)}{p(x,t)}}dx\right) %
\right] ^{\eta }\left\Vert \left\vert \varphi \right\vert \right\Vert
_{W_{0,\vartheta }^{1,p(.,.)}\left( D\times \Omega \right) }
\end{eqnarray*}%
\begin{eqnarray*}
&\leq &C\left[ E\left( \dint\limits_{D}\left\vert \gamma \left( x,t\right)
+g\left( u\right) \vartheta \left\vert \nabla u\right\vert
^{p(x,t)-1}\right\vert ^{q(x,t)}\vartheta ^{-\frac{q(x,t)}{p(x,t)}}dx\right) %
\right] ^{\eta }\left\Vert \left\vert \varphi \right\vert \right\Vert
_{W_{0,\vartheta }^{1,p(.,.)}\left( D\times \Omega \right) } \\
&\leq &C^{\ast }\left[ E\left( \dint\limits_{D}\left\vert \gamma \left(
x,t\right) \right\vert ^{q(x,t)}\vartheta ^{-\frac{q(x,t)}{p(x,t)}}dx\right)
\right. \\
&&\left. +E\left( \dint\limits_{D}\left\vert g\left( u\right) \right\vert
^{q(x,t)}\left\vert \nabla u\right\vert ^{p(x,t)}\vartheta dx\right) \right]
^{\eta }\left\Vert \left\vert \varphi \right\vert \right\Vert
_{W_{0,\vartheta }^{1,p(.,.)}\left( D\times \Omega \right) } \\
&\leq &C^{\ast }\left[ \rho _{q\left( .,.\right) ,\vartheta ^{\ast }}\left(
\gamma \right) +\max \left\{ \left\Vert g\right\Vert _{\infty
}^{q^{-}},\left\Vert g\right\Vert _{\infty }^{q^{+}}\right\} \rho _{p\left(
.,.\right) ,\vartheta }\left( \nabla u\right) \right] ^{\eta }\left\Vert
\left\vert \varphi \right\vert \right\Vert _{W_{0,\vartheta
}^{1,p(.,.)}\left( D\times \Omega \right) }
\end{eqnarray*}%
where%
\begin{equation*}
\eta =\left\{ 
\begin{array}{c}
\frac{1}{^{q^{-}}},\text{ if }\left\Vert A_{0}\left( x,t,u,\nabla u\right)
\right\Vert _{q(.,.),\vartheta ^{\ast }}\geq 1 \\ 
\frac{1}{^{q^{+}}},\text{ if }\left\Vert A_{0}\left( x,t,u,\nabla u\right)
\right\Vert _{q(.,.),\vartheta ^{\ast }}\leq 1%
\end{array}%
\right. ,
\end{equation*}%
and $\frac{1}{p(.,.)}+\frac{1}{q(.,.)}=1$. Thus, we obtain that $\Gamma _{2}$
is bounded. Therefore, we get that $\Gamma $ is bounded.

\textbf{Step 2.} Now, we will show that the operator $\Gamma $ is coercive.
By $\left( H_{5}\right) $, we have%
\begin{eqnarray*}
\frac{\langle \Gamma _{1}(u),u\rangle }{\left\Vert \left\vert u\right\vert
\right\Vert _{W_{0,\vartheta }^{1,p(.,.)}\left( D\times \Omega \right) }}
&\geq &\frac{E\left( \dint\limits_{D}\alpha \left\vert \nabla u\right\vert
^{p(x,t)}\vartheta (x,t)dx\right) }{\left\Vert \nabla u\right\Vert
_{p(.,.),\vartheta }} \\
&=&\frac{\alpha \rho _{p(.,.),\vartheta }(\nabla u)}{\left\Vert \nabla
u\right\Vert _{p(.,.),\vartheta }}\geq C\left\Vert \nabla u\right\Vert
_{p(.,.),\vartheta }^{r}
\end{eqnarray*}%
for some $r>1.$ On the other hand, since the norm $\left\Vert A_{0}\left(
x,t,u,\nabla u\right) \right\Vert _{q(.,.),\vartheta ^{\ast }}$ is bounded,
then we have 
\begin{eqnarray*}
\left\vert \langle \Gamma _{2}(u),u\rangle \right\vert &\leq &C\left\Vert
A_{0}\left( x,t,u,\nabla u\right) \right\Vert _{q(.,.),\vartheta ^{\ast
}}\left\Vert u\right\Vert _{p(.,.),\vartheta } \\
&\leq &C^{\ast }\left\Vert \left\vert u\right\vert \right\Vert
_{W_{0,\vartheta }^{1,p(.,.)}\left( D\times \Omega \right) }.
\end{eqnarray*}%
This follows that 
\begin{equation*}
\frac{\langle \Gamma (u),u\rangle }{\left\Vert \left\vert u\right\vert
\right\Vert _{W_{0,\vartheta }^{1,p(.,.)}\left( D\times \Omega \right) }}%
\longrightarrow \infty \text{ as }\left\Vert \left\vert u\right\vert
\right\Vert _{W_{0,\vartheta }^{1,p(.,.)}\left( D\times \Omega \right)
}\longrightarrow \infty .
\end{equation*}

\textbf{Step 3. }Now, we will obtain that the operator $\Gamma $ is
pseudo-monotone from $W_{0,\vartheta }^{1,p(.,.)}\left( D\times \Omega
\right) $ to $W_{\vartheta ^{\ast }}^{-1,q(.,.)}\left( D\times \Omega
\right) $. Let $u_{k}\rightharpoonup u$ in $W_{0,\vartheta
}^{1,p(.,.)}\left( D\times \Omega \right) $ and $\limsup\limits_{k%
\longrightarrow \infty }\langle \Gamma \left( u_{k}\right) ,u_{k}-u\rangle
\leq 0$. Since $\Gamma $ is bounded and $u_{k}\rightharpoonup u,$ then we
have 
\begin{equation}
\Gamma \left( u_{k}\right) \rightharpoonup h\text{ in }W_{\vartheta ^{\ast
}}^{-1,q(.,.)}\left( D\times \Omega \right) .  \label{13}
\end{equation}%
By (\ref{13}), we can write that%
\begin{equation}
\limsup\limits_{k\longrightarrow \infty }\langle \Gamma \left( u_{k}\right)
,u_{k}\rangle \leq \langle h,u\rangle .  \label{14}
\end{equation}%
By the growth condition $\left( H_{3}\right) $ and Proposition \ref%
{proposition20}, the sequence $\left( A\left( x,t,u_{k},\nabla u_{k}\right)
\right) _{k\in 
\mathbb{N}
}$ is bounded in $\left( L_{\vartheta ^{\ast }}^{q(.,.)}(D\times \Omega
)\right) ^{d}$ such that 
\begin{equation}
A\left( x,t,u_{k},\nabla u_{k}\right) \rightharpoonup \varphi \text{ in }%
\left( L_{\vartheta ^{\ast }}^{q(.,.)}(D\times \Omega )\right) ^{d}
\label{15}
\end{equation}%
as $k\longrightarrow \infty .$ Similarly, since $\left( A_{0}\left(
x,t,u_{k},\nabla u_{k}\right) \right) _{k}$ is bounded in $L_{\vartheta
^{\ast }}^{q(.,.)}(D\times \Omega )$, then there exists a function $\psi \in
L_{\vartheta ^{\ast }}^{q(.,.)}(D\times \Omega )$ such that 
\begin{equation}
A_{0}\left( x,t,u_{k},\nabla u_{k}\right) \rightharpoonup \psi \text{ in }%
L_{\vartheta ^{\ast }}^{q(.,.)}(D\times \Omega )  \label{16}
\end{equation}%
as $k\longrightarrow \infty .$ For all $v\in W_{0,\vartheta
}^{1,p(.,.)}\left( D\times \Omega \right) ,$ we have%
\begin{eqnarray}
\langle h,v\rangle &=&\underset{k\longrightarrow \infty }{\lim }\langle
\Gamma \left( u_{k}\right) ,v\rangle  \notag \\
&=&\underset{k\longrightarrow \infty }{\lim }E\left( \dint\limits_{D}A\left(
x,t,u_{k},\nabla u_{k}\right) \nabla vdx\right)  \notag \\
&&+\underset{k\longrightarrow \infty }{\lim }E\left(
\dint\limits_{D}A_{0}\left( x,t,u_{k},\nabla u_{k}\right) vdx\right)  \notag
\\
&=&E\left( \dint\limits_{D}\varphi \nabla vdx\right) +E\left(
\dint\limits_{D}\psi vdx\right) .  \label{17}
\end{eqnarray}%
Since $W_{0,\vartheta }^{1,p(.,.)}\left( D\times \Omega \right) $ is
compactly embedded in $L_{\vartheta }^{p(.,.)}(D\times \Omega )$ by Theorem %
\ref{theorem12}, we get%
\begin{equation}
u_{k}\longrightarrow u\text{ in }L_{\vartheta }^{p(.,.)}(D\times \Omega )%
\text{ and a.e. in }D\times \Omega .  \label{18}
\end{equation}%
By (\ref{16}) and (\ref{18}), we have%
\begin{equation}
E\left( \dint\limits_{D}A_{0}\left( x,t,u_{k},\nabla u_{k}\right)
u_{k}dx\right) \longrightarrow E\left( \dint\limits_{D}\psi udx\right)
\label{19}
\end{equation}%
as $k\longrightarrow \infty .$ On the other hand, if we consider (\ref{14})
and (\ref{17}), then we have 
\begin{eqnarray*}
&&\limsup\limits_{k\longrightarrow \infty }\langle \Gamma \left(
u_{k}\right) ,u_{k}\rangle \\
&=&\limsup\limits_{k\longrightarrow \infty }\left( E\left(
\dint\limits_{D}A\left( x,t,u_{k},\nabla u_{k}\right) \nabla u_{k}dx\right)
+E\left( \dint\limits_{D}A_{0}\left( x,t,u_{k},\nabla u_{k}\right)
u_{k}dx\right) \right) \\
&\leq &E\left( \dint\limits_{D}\varphi \nabla udx\right) +E\left(
\dint\limits_{D}\psi udx\right) .
\end{eqnarray*}%
Hence we obtain%
\begin{equation}
\limsup\limits_{k\longrightarrow \infty }E\left( \dint\limits_{D}A\left(
x,t,u_{k},\nabla u_{k}\right) \nabla u_{k}dx\right) \leq E\left(
\dint\limits_{D}\varphi \nabla udx\right) .  \label{20}
\end{equation}%
Due to $\left( H_{4}\right) $, we have%
\begin{equation*}
E\left( \dint\limits_{D}\left( A\left( x,t,u_{k},\nabla u_{k}\right)
-A\left( x,t,u_{k},\nabla u\right) \right) \left( \nabla u_{k}-\nabla
u\right) dx\right) >0
\end{equation*}%
and%
\begin{eqnarray*}
&&E\left( \dint\limits_{D}A\left( x,t,u_{k},\nabla u_{k}\right) \nabla
u_{k}dx\right) \\
&\geq &-E\left( \dint\limits_{D}A\left( x,t,u_{k},\nabla u\right) \nabla
udx\right) +E\left( \dint\limits_{D}A\left( x,t,u_{k},\nabla u_{k}\right)
\nabla udx\right) \\
&&+E\left( \dint\limits_{D}A\left( x,t,u_{k},\nabla u\right) \nabla
u_{k}dx\right) .
\end{eqnarray*}%
Moreover, by (\ref{15}), we get 
\begin{equation*}
\underset{k\longrightarrow \infty }{\lim \inf }E\left(
\dint\limits_{D}A\left( x,t,u_{k},\nabla u_{k}\right) \nabla u_{k}dx\right)
\geq E\left( \dint\limits_{D}\varphi \nabla udx\right) .
\end{equation*}%
It is obtained that 
\begin{equation}
\underset{k\longrightarrow \infty }{\lim }E\left( \dint\limits_{D}A\left(
x,t,u_{k},\nabla u_{k}\right) \nabla u_{k}dx\right) =E\left(
\dint\limits_{D}\varphi \nabla udx\right)  \label{21}
\end{equation}%
by (\ref{20}). If we consider (\ref{17}), (\ref{19}) and (\ref{21}), then we
have 
\begin{equation*}
\underset{k\longrightarrow \infty }{\lim }\langle \Gamma \left( u_{k}\right)
,u_{k}\rangle =\langle h,u\rangle .
\end{equation*}%
This follows from (\ref{18}) and $\left( H_{3}\right) $ that%
\begin{equation*}
A\left( x,t,u_{k},\nabla u\right) \longrightarrow A\left( x,t,u,\nabla
u\right)
\end{equation*}%
in $\left( L_{\vartheta ^{\ast }}^{q(.,.)}(D\times \Omega )\right) ^{d}$.
Hence, we get%
\begin{equation*}
\underset{k\longrightarrow \infty }{\lim }E\left( \dint\limits_{D}\left(
A\left( x,t,u_{k},\nabla u_{k}\right) -A\left( x,t,u_{k},\nabla u\right)
\right) \left( \nabla u_{k}-\nabla u\right) dx\right) =0.
\end{equation*}%
By Lemma \ref{lemma16}, we obtain 
\begin{equation*}
u_{k}\longrightarrow u\text{ in }W_{0,\vartheta }^{1,p(.,.)}\left( D\times
\Omega \right)
\end{equation*}%
and then $\nabla u_{k}\longrightarrow \nabla u$ a.e. in $D\times \Omega $
for a subsequence denoted by $\left( u_{k}\right) _{k\in 
\mathbb{N}
}$. Since $A$ and $A_{0}$ are Carath\'{e}odory functions, we have%
\begin{eqnarray*}
A\left( x,t,u_{k},\nabla u_{k}\right) &\longrightarrow &A\left(
x,t,u_{k},\nabla u\right) \\
A_{0}\left( x,t,u_{k},\nabla u_{k}\right) &\longrightarrow &A_{0}\left(
x,t,u_{k},\nabla u\right) .
\end{eqnarray*}%
This yields that $h=\Gamma \left( u\right) $ and the operator $\Gamma $ is
pseudo-monotone. Finally, if we consider the Theorem \ref{kritikteo}, then
there exists at least a weak solution of (\ref{1}).
\end{proof}

\bigskip

\end{document}